\documentclass[12pt,reqno]{amsart}
\usepackage{amssymb,amsmath,amsthm,newlfont,enumerate,color}

\renewcommand{\phi}{\varphi}

\numberwithin{equation}{section}

\newtheorem{Proposition}[equation]{Proposition}

\newtheorem{definition}[equation]{Definition}
\newtheorem{theorem}[equation]{Theorem}
\newtheorem{Lemma}[equation]{Lemma}

\begin{document}
\bibliographystyle{amsplain}

    \title[Properties of unbounded  TTO.]{Some properties of unbounded  truncated Toeplitz operators.}

    \author[A. Chettih, A. Yagoub, Z. Bendaoud]{ Ali Chettih$^1$, Ameur Yagoub$^2$, Zohra Bendaoud$^3$ }
   
     \address{Ali Chettih. Universit\'e de M. Khider\\ Biskra\\ Alg\'erie 07000.}
    \email{a.chettih@ens-lagh.dz}
\address{Ameur Yagoub. Laboratoire de math\'ematiques pures et appliqu\'es\\ Universit\'e de Amar telidji Laghouat\\ Alg\'erie 03000.}
  \email{a.yagoub@lagh-univ.dz}
   \address{Zohra Bendaoud. Department of mathematics, Teachers' Higher college of Laghouat, Algeria, 03000.}
    \email{zbendaouad@gmail.com}
    \keywords{Model space, Unbounded Truncated Toeplitz operators, modified compressed shifts, multiplication operators, self-adjoint operators.}
 \begin{abstract}
 In this paper, we study closed densely defined unbounded truncated Toeplitz operators on $K_u=(uH^2)^\perp$, where $u$ is an inner function, that commute with modified compressed shifts. The work also establishes properties related to their invertibility and self-adjointness.
 \end{abstract}

\maketitle

\section{Introduction}Truncated Toeplitz operators are the compressions of the usual Toeplitz operators on the Hardy space $H^2$ to its coinvariant subspace, the model space $K_u=(uH^2)^\perp$, where $u$ is an inner function. They were formally introduced by Sarason in \cite{sar}. The operators $S_u,S_u^*$, and $S_u^\alpha$ (with $\alpha\in \mathbb{C}\cup \lbrace\infty\rbrace)$ denote the compressed shift, backward compressed shift, and modified compressed shift operators on $K_u$, respectively. A bounded operator on $ K_u $ that commutes with $ S_u$ must be a truncated Toeplitz operator with a symbol in \( H^\infty \) (Sarason \cite{sar2}).    Sedlock \cite{sed2} established the analogous result for bounded operators on $K_u$ that commute with $S_u^\alpha$. For unbounded truncated Toeplitz operators, Su\'arez \cite{su} described the closed densely defined operators on $K_u$ that commute with $S_u^*$. Sarason \cite{sar3} later extended this work, demonstrating that $A=\varphi(S_u)$ for some $\varphi$ in a specific class of Nevanlinna functions.

In this article, we examine unbounded truncated Toeplitz operators that commute with $ S_u^\alpha $. We delineate the essential and sufficient conditions for the invertibility of these operators. We also explain when these operators are self-adjoint, including a description of their eigenvalues and eigenspaces.

In Section 2, we present notation and review certain findings regarding unbounded operators. We also present the multiplication operator and its properties. In Section 3, we present some results of papers of Sarason \cite{sar3,sar4}.  We study unbounded truncated Toeplitz operators commuting with  $S_u^\alpha$, in Section 4. Section 5 sets out the necessary and sufficient conditions for these operators to be invertible. Finally, we discuss unbounded self-adjoint truncated Toeplitz operators and their eigenvalues and eigenspaces in Section 6.
\section{Preliminaries}
Let $\mathbb{C}$ be the complex plane, $\mathbb{D}$ be the open unit disk, and $\mathbb{T}$ be the unit circle. Denote by $H(\mathbb{D})$ the space of functions holomorphic in $\mathbb{D}$. We consider $H^2$ to be the standard Hardy space on $\mathbb{D}$ and $H^\infty$ to be the space of analytic functions that are bounded on $\mathbb{D}$. The standard shift operator on $H^2$ is $Sf = zf$, and its adjoint is $S^*f = \frac{f-f(0)}{z}$. If $|u| = 1$ $a.e.$ on $\mathbb{T}$, then $u$ is an inner function. For each non-constant inner function $ u $, the model space is defined as $ K_u = H^2 \circleddash uH^2 $. It is important to observe that $ K_u^\infty = K_u \cap H^\infty $ constitutes a dense subspace of $ K_u $. The reproducing kernel in $K_u$ for a point $\lambda\in\mathbb{D}$ is the function 
$k_\lambda^u(z)=\frac{1-\overline{u(\lambda)}u(z)}{1-\overline{\lambda}z},  z\in\mathbb{T}.$
The space $K_u$ has the natural conjugation $C_uf=\widetilde{f}:=u\overline{zf}$.
We will call the compression of the shift operator $S$ to $K_u$ $S_u$, and the restriction of $S^*$ to $K_u$ $S^*_u$. If $\alpha$ is in $\mathbb{D}$, the modified compressed shifts are defined as follows: 
$$S_u^\alpha=S_u+\frac{\alpha}{1-\alpha\overline{u(0)}}k_0^u\otimes\widetilde{k_0^u}.$$
If $\alpha\in\mathbb{T}$, then the Clark operators on $K_u$ are $S_u^\alpha:= U_\alpha$. A groundbreaking result by Clark \cite{cl} shows that each $U_\alpha$ is a cyclic unitary operator and that every unitary, rank-one perturbation of $S_u$ is of the form $U_\alpha$.
The family of measures $ \lbrace\mu_\alpha, \alpha\in\mathbb{T}\rbrace $ are called the Clark measures for $ u $, the positive measures defined by the relation 
\begin{equation}\label{cla}
Re\frac{1+\overline{\alpha}u(z)}{1-\overline{\alpha}u(z)}= \int_{\mathbb{T}} \frac{\zeta+z}{\zeta-z}d\mu_\alpha(\zeta).
\end{equation}
We refer the reader to the surveys \cite{ps,s} for more details.
\begin{theorem}\label{cl}(Clark \cite{cl}). Let $\mu_\alpha$ be the only finite positive Borel measure on $\mathbb{T}$ that satisfies the form (\ref{cla}) for an inner function $u$.
\begin{equation}
    (V_\alpha f)(z)=(1-\overline{\alpha} u(z))\int \frac{f(\zeta)}{1-\overline{\zeta} z}d\mu_\alpha(\zeta),
\end{equation}
 is a single operator that goes from $L^2(\mu_\alpha)$ to $K_u$. Also, if
 $$ M_\zeta:L^2(\mu_\alpha)\rightarrow L^2(\mu_\alpha),\quad (M_\zeta f)(\zeta)= \zeta f(\zeta),$$
 then $$ V_\alpha M_\zeta V_\alpha^{-1} =U_{\beta_\alpha}, \quad \beta_\alpha=\frac{\alpha-u(0)}{1-\alpha\overline{u(0)}}.$$
\end{theorem}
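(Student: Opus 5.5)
The plan is to prove that $V_\alpha$ is unitary from $L^2(\mu_\alpha)$ onto $K_u$ and then check the intertwining relation on a spanning family. I will use Cauchy kernels $c_\lambda(\zeta)=(1-\overline{\lambda}\zeta)^{-1}$, $\lambda\in\mathbb{D}$, viewed in $L^2(\mu_\alpha)$, since these match the reproducing kernels $k_\lambda^u$.

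\emph{Step 1: $V_\alpha$ sends Cauchy kernels to reproducing kernels.} Expanding (\ref{cla}) in the standard way gives, for $z,\lambda\in\mathbb{D}$,
$$\int_{\mathbb{T}}\frac{d\mu_\alpha(\zeta)}{(1-\overline{\zeta}z)(1-\overline{\lambda}\zeta)}=\frac{1-\overline{u(\lambda)}u(z)}{(1-\overline{\alpha}u(z))(1-\alpha\overline{u(\lambda)})(1-\overline{\lambda}z)}.$$
To derive this, write $\frac{1}{(1-\overline{\zeta}z)(1-\overline{\lambda}\zeta)}$ as a combination of $\frac{\zeta+z}{\zeta-z}$, $\frac{\overline{\zeta}+\overline{\lambda}}{\overline{\zeta}-\overline{\lambda}}$ and constants, via the usual partial fractions with $|\zeta|=1$. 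Then use that the Herglotz integral equals $\frac{1+\overline{\alpha}u}{1-\overline{\alpha}u}$ up to an imaginary constant. The constant cancels because the measure is real.

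Multiplying by $(1-\overline{\alpha}u(z))$ gives
$$V_\alpha c_\lambda=\overline{(1-\alpha\overline{u(\lambda)})}^{\,-1}k^u_\lambda .$$
I expect this identity to be the main computational obstacle, since the signs and constants must be tracked carefully.

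\emph{Step 2: isometry and surjectivity.} Taking $z=\mu$ in the identity gives
$$\langle c_\lambda,c_\mu\rangle_{L^2(\mu_\alpha)}=\frac{k^u_\lambda(\mu)}{(1-\alpha\overline{u(\lambda)})(1-\overline{\alpha}u(\mu))}=\langle V_\alpha c_\lambda,V_\alpha c_\mu\rangle_{K_u}.$$
So $V_\alpha$ is isometric on $\operatorname{span}\{c_\lambda\}$. These spans are dense on both sides:
\begin{itemize}
\item in $K_u$ by the reproducing property;
\item in $L^2(\mu_\alpha)$ by a Clark-type argument. If $f\perp c_\lambda$ for all $\lambda$, the Cauchy transform of $f\,d\mu_\alpha$ vanishes. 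Because $\mu_\alpha$ is singular (as $u$ is inner, $\operatorname{Re}$ of the Herglotz function has radial limit $0$ a.e.), the F.\ and M.\ Riesz theorem forces $f\,d\mu_\alpha=0$.
\end{itemize}
Hence $V_\alpha$ extends to a unitary onto $K_u$, and it agrees with the integral formula by continuity of point evaluations.

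\emph{Step 3: intertwining.} Compute $M_\zeta^* c_\lambda=\overline{\zeta}c_\lambda$, using $\overline{\zeta}=1/\zeta$, as $\overline{\lambda}\,c_\lambda+\overline{\zeta}$. It is then equivalent to check that $V_\alpha M_\zeta^*V_\alpha^{*}$ equals the adjoint of $U_{\beta_\alpha}$ on the kernels $k_\lambda^u$. For this I will use the explicit rank-one form
$$S_u^{\beta *}=S_u^*+\overline{c}\,\widetilde{k^u_0}\otimes k^u_0 ,$$
together with $S_u^*k^u_\lambda=\overline{\lambda}k^u_\lambda-\overline{u(\lambda)}\,\widetilde{k^u_0}\cdot(\ldots)$. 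The needed identities for $V_\alpha 1$ and $S^*k_\lambda^u$ are
$$V_\alpha 1=(1-\overline{\alpha}u)\,\frac{1+\overline{\alpha}u(0)\ \text{terms}}{\ldots}\,k_0^u ,$$
which is obtained from Step 1 at $\lambda=0$, and $S^*k^u_\lambda$ expressed through $\overline{\lambda}k^u_\lambda$ and $\widetilde{k^u_0}$.

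Matching the coefficient of the rank-one term then forces $\beta=\frac{\alpha-u(0)}{1-\alpha\overline{u(0)}}$. Alternatively, I could avoid this computation: $V_\alpha M_\zeta V_\alpha^{-1}$ is a unitary rank-one perturbation of $S_u$, since it agrees with $S_u$ on $\{f\in K_u: f\perp k_0^u\ \text{suitably}\}$. Clark's classification stated above then says it is some $U_\beta$, and $\beta$ is identified by evaluating on a single vector.
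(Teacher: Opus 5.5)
The paper gives no proof of this statement (it is quoted from Clark), so I am judging your proposal on its own merits. Steps 1 and 2 are the standard argument and are sound, apart from one slip. Your displayed integral identity gives $V_\alpha c_\lambda=(1-\alpha\overline{u(\lambda)})^{-1}k^u_\lambda$, \emph{without} the complex conjugate you wrote. Your Step 2 inner-product formula only works with the unconjugated factor. So does the identification of the unitary extension with the integral formula via $(V_\alpha f)(\lambda)=\langle f,V_\alpha^*k^u_\lambda\rangle$.

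The genuine gap is Step 3. It is never carried out: the formulas for $V_\alpha 1$ and $S_u^*k_\lambda^u$ are placeholders. Its announced outcome is also not what the computation gives.

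First, since $M_\zeta^*c_\lambda=\overline{\lambda}c_\lambda+\overline{\zeta}$, the vector you need is $V_\alpha\overline{\zeta}$, not $V_\alpha 1=V_\alpha c_0$, which is already known. A clean way to get it runs in three steps.
\begin{enumerate}
\item From $\frac{\zeta}{1-\overline{\zeta}z}=\zeta+\frac{z}{1-\overline{\zeta}z}$ you get $V_\alpha(\zeta f)=zV_\alpha f+\langle f,\overline{\zeta}\rangle_{L^2(\mu_\alpha)}(1-\overline{\alpha}u)$.
\item Applying $P_u$, and using $P_uu=0$ and $P_u1=k_0^u$, gives $V_\alpha M_\zeta V_\alpha^{-1}=S_u+k_0^u\otimes V_\alpha\overline{\zeta}$.
\item Step 1 at $\lambda=0$ gives the Cauchy transform $K=\frac{1-\overline{u(0)}u}{(1-\overline{\alpha}u)(1-\alpha\overline{u(0)})}$ of $\mu_\alpha$. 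Hence $V_\alpha\overline{\zeta}=(1-\overline{\alpha}u)\frac{K-K(0)}{z}=\frac{\overline{\alpha}-\overline{u(0)}}{|1-\alpha\overline{u(0)}|^2}\,\widetilde{k_0^u}$.
\end{enumerate}
Together these give
\[
V_\alpha M_\zeta V_\alpha^{-1}=S_u+\frac{\alpha}{1-\alpha\overline{u(0)}}\,k_0^u\otimes\widetilde{k_0^u}=S_u^\alpha .
\]

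So if, as you wrote, the comparison is with $S_u^{\beta}$ in the paper's normalization, matching the rank-one coefficient forces $\beta=\alpha$. It does not force $\beta=\frac{\alpha-u(0)}{1-\alpha\overline{u(0)}}$. This agrees with how the paper itself later uses the result, namely $S_u^\alpha=V_\alpha M_zV_\alpha^{-1}$.

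The value $\beta_\alpha$ is correct only in Clark's parametrization $U_\gamma=S_u+\frac{\gamma+u(0)}{1-|u(0)|^2}\,k_0^u\otimes\widetilde{k_0^u}$, which is characterized by $U_\gamma\widetilde{k_0^u}=\gamma k_0^u$. Indeed $S_u^\alpha\widetilde{k_0^u}=\beta_\alpha k_0^u$. You must fix this normalization explicitly and actually do the computation. As written, the identification of $\beta_\alpha$ is asserted rather than proved, and it is false for the normalization you named.

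Your alternative route has the same hole. The perturbation is $k_0^u\otimes V_\alpha\overline{\zeta}$, so the operator agrees with $S_u$ on $\{V_\alpha\overline{\zeta}\}^\perp=\{\widetilde{k_0^u}\}^\perp$, not on $\{k_0^u\}^\perp$. Pinning down the parameter by evaluating on a single vector still requires computing $V_\alpha\overline{\zeta}$ or $V_\alpha\zeta$. For the record, $S_u^*k_\lambda^u=\overline{\lambda}k_\lambda^u-\overline{u(\lambda)}\,\widetilde{k_0^u}$ exactly, with no extra factor.
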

The Cauchy integral of a function $ f $ in $ L^1 $ of $ \mathbb{T} $ is the function $ \mathbb{P} f $ defined in $ \mathbb{D} $ by
$$(\mathbb{P} f)(z) =\frac{1}{2\pi}\int_\mathbb{T}\frac{f(e^{i\theta})}{1 - e^{-i\theta}z}d\theta.$$
 If $f$ is in $L^2$, then $\mathbb{P} f$ equals $Pf$, the projection of $f$ onto $H^2$.  For $\varphi$ in $L^2$ ,
the Toeplitz operator on $H^2$ with symbol $\varphi$ , is defined by $$T_\varphi f= \mathbb{P}(\varphi f ), \textit{ with domain } \mathcal{D}(T_\varphi) = \lbrace f \in H^2, \mathbb{P}(\varphi f) \in H^2\rbrace. $$ 
It is a densely defined, closed operator, and bounded if and only if $\varphi$ is bounded. We define $\mathbb{P}_uf$ for $f$ in $L^1$ as the function in $H(\mathbb{D}) $ provided by
$$(\mathbb{P}_u f)(z) =\frac{1}{2\pi}\int_\mathbb{T}f(e^{i\theta})\frac{1-\overline{u(e^{i\theta})}u(z)}{1 - e^{-i\theta}z}d\theta.$$ If $f$ is in $L^2$, then $\mathbb{P}_u f$ equals $P_uf$, the projection of $f$ onto $K_u$.
The truncated Toeplitz operator $A^u_\varphi$ with symbol $\varphi$ is defined as $$A^u_\varphi f= \mathbb{P}_u(\varphi f )\textit{ with domain } \mathcal{D}(A^u_\varphi) = \lbrace f \in K_u, \mathbb{P}_u(\varphi f) \in K_u \rbrace.$$ The operator $A^u_\varphi$ is closed  and densely defined (since $K_u^\infty \subset \mathcal{D}(A^u_\varphi)$). If $\varphi\in L^2$ and $A^u_\varphi$ is bounded, then $(A^u_\varphi)^* = A^u_{\overline{\varphi}}$ and $A^u_\varphi$ is $C_u$-symmetric, meaning $C_uA^u_\varphi C_u = (A^u_\varphi)^*.$ If $\varphi\in H^\infty$, then $A^u_\varphi=0$ if and only if $\varphi\in  uH^\infty$. If $\varphi\in L^2$, then $A^u_\varphi$ is injective if and only if the inner factor of $\varphi$ and $u$ are relatively prime (we denote  by $ g.c.i.d.(u,\varphi ) = 1$).
The Sedlock classes $\mathcal{B}_u^\alpha,\alpha\in\widehat{\mathbb{C}}= \mathbb{C}\cup \infty$, are precisely the maximal subalgebras of truncated Toeplitz operators bounded, defined by  
$$\mathcal{B}_{u}^{\alpha} = \lbrace A^u_{\varphi+\alpha \overline{S_u \widetilde{\varphi}}+c},\varphi\in K_u,c\in\mathbb{C}\rbrace.$$
The algebras $\mathcal{B}_{u}^{\alpha}$ serve as the commutants of modified compressed shifts. The commutant of $S_{u}^{\alpha}$, denoted $\{S_{u}^{\alpha}\}'$, is defined as the collection of all bounded operators on $K_u$ that commute with $S_{u}^{\alpha}$. In the subsequent lemma, we encapsulate the principal characteristics of the classes $\mathcal{B}_{ {u}}^\alpha$ and their associations with $S_u^\alpha$.
\begin{Lemma}\cite{sed}\label{pro10} Let $u$ be a non-constant inner function. Then 
		\begin{enumerate}
\item $ A\in\mathcal{B}_{u}^{\alpha} $ if and only if $A^*\in\mathcal{B}_{u}^{1/\overline{\alpha}}\ $.
\item  If $|\alpha| < 1$, then $S_u^\alpha$ is a completely nonunitary contraction, and 
			$$\mathcal{B}_{u}^{\alpha} = \{S_{u}^{\alpha}\}'=\left\lbrace \Psi(S_u^\alpha)=A^u_{\frac{\Psi}{1-\alpha\overline{u}}},\Psi\in H^\infty\right\rbrace. $$ 
\item If $|\alpha| > 1$, then $$\mathcal{B}_{u}^{\alpha} = \{(S_u^{1/\overline{\alpha}})^*\}'=\left\lbrace \widehat{\Psi}(S_u^{1/\overline{\alpha}})^*= A^u_{\frac{\alpha\overline{\Psi}}{\alpha-u}},\Psi\in H^\infty\right\rbrace, $$
     where $\widehat{\Psi}(z)= \overline{\Psi(\overline{z})}$ in $\mathbb{T}$.
\item If $|\alpha| = 1$, then $S_{u}^{\alpha} $ is a unitary operator  and $$\mathcal{B}_{u}^{\alpha} = \{S_{u}^{\alpha}\}'=\left\lbrace \Phi(S_u^\alpha),\Phi\in L^\infty(\mu_\alpha)\right\rbrace,\quad \mu_\alpha \textit{Clark measure}. $$ 
Operators in $\mathcal{B}_{u}^{\alpha}$ are unitarily equivalent to multiplication operators $M_\Phi$ on $L^2(\mu_\alpha)$ induced by functions  $\Phi\in L^\infty(\mu_\alpha)$.
\item   Let $A$ and $B$  be bounded truncated Toeplitz operators. Then $AB$ is also a bounded truncated Toeplitz operator if and only if either $A$ or $B$ is a scalar multiple of the identity, or $A,B\in\mathcal{B}_{u}^{\alpha}$ for some $\alpha\in\widehat{\mathbb{C}}$. In the latter situation, $AB\in\mathcal{B}_{u}^{\alpha}$ as well.
\end{enumerate}
	\end{Lemma}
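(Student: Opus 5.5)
This lemma is quoted from Sedlock \cite{sed}, so I would not prove it from scratch. The plan is to check that each item follows from the definition of $\mathcal{B}_u^\alpha$ together with the standard tools already recalled: Sarason's commutant theorem \cite{sar2}, Sz.-Nagy--Foias $H^\infty$-calculus, Clark's Theorem \ref{cl}, and the symmetry $(A^u_\varphi)^*=A^u_{\overline\varphi}$.

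\textbf{Item (1).} I would take $A=A^u_{\varphi+\alpha\overline{S_u\widetilde\varphi}+c}$, so that $A^*=A^u_{\overline\varphi+\overline\alpha S_u\widetilde\varphi+\overline c}$. Set $\psi=\overline\alpha\,S_u\widetilde\varphi\in K_u$. The identity $\overline{S_u\widetilde{\psi}}=\overline\alpha^{-1}\overline{\varphi}$ holds modulo constants and modulo $\overline{uH^2}+uH^2$, which annihilate truncated Toeplitz symbols. This is checked using the conjugation $C_u$ and the relation $C_uS_uC_u=S_u^*$. It puts $A^*$ in the form $A^u_{\psi+\overline{\alpha}^{-1}\overline{S_u\widetilde\psi}+c'}$, that is, $A^*\in\mathcal{B}_u^{1/\overline\alpha}$.

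\textbf{Item (2), $|\alpha|<1$.} The operator $S_u^\alpha$ is a rank-one perturbation of $S_u$, and its characteristic function is the Frostman shift $u_\alpha=(u-a)/(1-\overline a u)$ for a suitable $a\in\mathbb{D}$. Hence $S_u^\alpha$ is unitarily equivalent to $S_{u_\alpha}$ through the Crofoot transform $f\mapsto \frac{\sqrt{1-|a|^2}}{1-\overline a u}f$. Sarason's commutant theorem for $S_{u_\alpha}$ then gives $\{S_u^\alpha\}'=\{\Psi(S_u^\alpha):\Psi\in H^\infty\}$. The Crofoot transform carries $A^{u_\alpha}_\Psi$ to $A^u_{\Psi/(1-\alpha\overline u)}$, and the constants $a$ and $\alpha$ are related via $\alpha=\overline{?}$-normalization. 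Expanding $\Psi/(1-\alpha\overline u)$ then recovers the Sedlock form $\varphi+\alpha\overline{S_u\widetilde\varphi}+c$.

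\textbf{Item (3), $|\alpha|>1$.} This follows from (1) and (2) by taking adjoints. The adjoint of $A^u_{\Psi/(1-\overline\beta\overline u)}$ with $\beta=1/\overline\alpha$ is $A^u_{\overline\Psi/(1-\beta u)}$, and $\overline\Psi/(1-\beta u)=\alpha\overline\Psi/(\alpha-u)$.

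\textbf{Item (4), $|\alpha|=1$.} Clark's theorem shows that $U_\alpha$ is cyclic and unitary, and unitarily equivalent to $M_\zeta$ on $L^2(\mu)$. The commutant of a cyclic multiplication operator is the maximal abelian algebra $\{M_\Phi:\Phi\in L^\infty(\mu)\}$. Identifying these operators with the elements of $\mathcal{B}_u^\alpha$ requires Sedlock's computation that $\mathcal{B}_u^\alpha$ is exactly the commutant, in the form $\{S_u^\alpha\}'\cap\mathcal{T}_u=\mathcal{B}_u^\alpha$. The point is that every operator commuting with $U_\alpha$ is a truncated Toeplitz operator, which follows by verifying Sarason's characterization $T-S_uTS_u^*=$ rank-two of the prescribed shape.

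\textbf{Item (5).} This is the deepest part, and I expect it to be the main obstacle. The plan is to use Sarason's rank-two characterization of truncated Toeplitz operators: $T$ is truncated Toeplitz iff $T-S_uTS_u^*=\psi\otimes k_0+k_0\otimes\chi$. Computing $AB-S_uABS_u^*$ in terms of the symbols of $A$ and $B$ forces a compatibility relation between them. Sedlock solves this relation by a case analysis on the functionals involved, and that is where the parameter $\alpha$ emerges. I would cite \cite{sed} for this item rather than redo it.
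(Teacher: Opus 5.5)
The paper gives no proof of this lemma; it simply quotes it from Sedlock's thesis \cite{sed}. Deferring to \cite{sed} for (5), and for identifying the symbol class $\mathcal{B}_u^\alpha$ with the commutant, therefore matches what the paper does. Your outline for (2)--(4) (Crofoot transform with Sarason's commutant theorem, adjoints, Clark's model) is the standard route. Your rank-two check in (4) is sound: if $TU_\alpha=U_\alpha T$ then $U_\alpha TU_\alpha^*=T$. Substituting $S_u=U_\alpha-c\,k_0^u\otimes\widetilde{k_0^u}$, with $c=\alpha/(1-\alpha\overline{u(0)})$, into $T-S_uTS_u^*$ leaves only terms of the form $\psi\otimes k_0^u+k_0^u\otimes\chi$. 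The explicit computations you do write out, however, contain errors you should fix.

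\textbf{Item (1).} Since $C_u$ is antilinear and $C_uS_uC_u=S_u^*$, you get $\widetilde\psi=\alpha S_u^*\varphi$ and $S_u\widetilde\psi=\alpha(\varphi-\varphi(0)k_0^u)$. Hence $\overline{S_u\widetilde\psi}\equiv\overline\alpha\,\overline\varphi$, not $\overline\alpha^{-1}\overline\varphi$; it is $\overline\alpha^{-1}\overline{S_u\widetilde\psi}$ that matches $\overline\varphi$. The cases $\alpha\in\{0,\infty\}$ also need separate (trivial) treatment.

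\textbf{Item (2).} You must resolve the placeholder ``$\alpha=\overline{?}$''. With the paper's normalization of $S_u^\alpha$, the Crofoot parameter is $\alpha$ itself, and $J_\alpha A^{u_\alpha}_\Psi J_\alpha^{-1}=A^u_{\Psi/(1-\alpha\overline u)}$ with the same $\Psi$. The reason is that $(1-|\alpha|^2)\Psi/|1-\overline\alpha u|^2-\Psi/(1-\alpha\overline u)=\overline\alpha u\Psi/(1-\overline\alpha u)\in uH^\infty$, using $u-\alpha=u(1-\alpha\overline u)$ on $\mathbb{T}$. Moreover, expanding $\Psi/(1-\alpha\overline u)$ gives $\varphi+\alpha\overline{S_u\widetilde\varphi}$ directly only when $\Psi\in K_u$. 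In general you cannot discard $uH^\infty$, because $A^u_{uh/(1-\alpha\overline u)}=\alpha A^u_{h/(1-\alpha\overline u)}$; the kernel of $\Psi\mapsto A^u_{\Psi/(1-\alpha\overline u)}$ is $u_\alpha H^\infty$. First write $\Psi=\sum_n u^n g_n$ with $g_n\in K_u$, then take $\varphi=\sum_n\alpha^n g_n$.

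\textbf{Item (3).} The bars are swapped. With $\beta=1/\overline\alpha$, item (1) gives $A^*\in\mathcal{B}_u^\beta$, so $A^*=A^u_{\Psi/(1-\beta\overline u)}$. Then $A=A^u_{\overline\Psi/(1-\overline\beta u)}=A^u_{\alpha\overline\Psi/(\alpha-u)}$, since $\overline\beta=1/\alpha$. As you wrote it, you start from an element of $\mathcal{B}_u^{1/\alpha}$ and end at $\overline\alpha\,\overline\Psi/(\overline\alpha-u)$. That is not the claimed symbol unless $\alpha$ is real.
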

Let $b$ be a nonconstant function in the unit ball of $H^\infty$, but not an extreme point. The de Brange-Rovnyak space $\mathcal{H}(b)$ is the image of $H^2$ through the operator $(I-T_bT_{\overline{b}})^{1/2}$, which means $$\mathcal{H}(b)=(I-T_bT_{\overline{b}})^{1/2}(H^2).$$ It is generally established that if $\|b\|_{\infty} < 1$, then $\mathcal{H}(b)=H^2$, and if $b$ is an inner function, then $\mathcal{H}(b)=H^2\ominus bH^2$. By the general property of de Branges–Rovnyak spaces, we know that $\mathcal{H}(b)$ is $S^*$-invariant. This space is essential to our study. Indeed, every operator we consider in this article is defined in terms of it. The book \cite{sarb} has a lot of information about the spaces $\mathcal{H} (b)$.
The Nevanlinna class $\mathcal{N}$ comprises the holomorphic functions in $\mathbb{D}$ characterized by the relation $\varphi=\frac{\psi}{\chi}$,  $\psi, \chi \in H^\infty$ and $\chi\neq0$.
The Smirnov class $\mathcal{N}^+\subset \mathcal{N}$ comprises all functions $\varphi=\frac{\psi}{\chi}\in\mathcal{N}$ with $\chi$ outer. It is stated in \cite{sar4} that every nonzero function $\varphi$ in $\mathcal{N}^+$ admits a canonical representation, namely a unique decomposition $\varphi = \frac{b}{a}$, where $a,b\in H^\infty$, $a$  outer, $a(0) > 0$, and $|a|^2 + |b|^2 = 1$ $a. e$ on $\mathbb{T}$. In this context, $f \in \mathcal{H}(b)$ if and only if there exists a unique $g \in H^2$ such that $T_{\overline{b}} f = T_{\overline{a}} g$. As defined in \cite{sar3}, the local Smirnov class $\mathcal{N}^+_u$ by 
$$\mathcal{N}^+_u=\lbrace \varphi=\frac{\psi}{\chi}\in \mathcal{N}^+, g.c.i.d.(u,\chi ) = 1  \rbrace,$$
 equivalent by  canonical representation 
 $$\mathcal{N}^+_u=\lbrace \varphi=\frac{b}{va}\in \mathcal{N}^+,a, b \in H^\infty, v \textit{ inner },g.c.i.d.(v,b ) = 1, g.c.i.d.(v,u ) = 1  \rbrace.$$
 We will go over some concepts and basic ideas from the theory of unbounded operators on a Hilbert space. 
Think of an operator $ A $ that works on a complex Hilbert space $ \mathcal{H}$.  We will use the symbol $\mathcal{H}\times \mathcal{H}$ to mean $\mathcal{H}\oplus \mathcal{H}$ and think of it as the set of all $2\times 1$ column vectors whose components belong to  $\mathcal{H}$. We will summarize here some standard facts and we refer to standard references, e.g., \cite{B,E,k}, for proofs, further details and references
\begin{definition} Let $A$ and $B$ are  unbounded linear operator on $  \mathcal{H}$. 
\begin{enumerate}
\item the graph of $A$ is defined by
$$  \mathcal{G}( A)=\lbrace (f,Af) \in \mathcal{H}\oplus \mathcal{H},f\in \mathcal{D}(A)\rbrace.$$
\item If $\mathcal{D}(B)\subset \mathcal{D}(A)$ and $Af = Bf$ for every $f\in \mathcal{D}(B)$, then we say that $A$ is an extension of $B$. We say that $B \subset A$ in this case. Also, $B \subset A$ if and only if $\mathcal{G}(B) \subset \mathcal{G}(A)$.
\item An operator $A$ is said to be closed whenever its graph $\mathcal{G}( A)$ is a closed subset of  $\mathcal{H}\oplus \mathcal{H}$.
\item $A$ is closable if $\overline{\mathcal{G}( A)}$ is the graph of a closed operator. In this case, $\overline{A}$ denotes this operator and is called the closure of $A$.
\begin{eqnarray*}
 \mathcal{D}(\overline{A})&= & 	\lbrace x\in \mathcal{H}, \exists (x_n)_n\subset\mathcal{D}(A), x_n \rightarrow x, Ax_n \textit{ converge}\rbrace\\
\overline{A} x&= &\lim_{n} Ax_n, \forall x\in  \mathcal{D}(\overline{A}).
\end{eqnarray*}	 
\item If $A$ is a closed linear operator with domain $\mathcal{D}(A)$ that is dense in $\mathcal{H}$, then it commutes with a bounded operator $B$ on $\mathcal{D}(A)$ if $B(\mathcal{D}(A))$ is a subset of $\mathcal{D}(A)$ and $BAf = ABf$ for all $f$ in $\mathcal{D}(A)$.
\item An operator $A$ on $\mathcal{H}$ has a bounded inverse if  $\exists A^{-1}$ bounded operator,  such that $AA^{-1}=I$ on $\mathcal{H}$ and $A^{-1}A=I$ on $\mathcal{D}(A)$, with $A^{-1}(\mathcal{D}(A))\subset\mathcal{D}(A)$.
\item Let $A$ be an operator on $\mathcal{H}$ that is densely defined and has a domain $\mathcal{D}(A)$. The adjoint $A^*$ is defined as
\begin{eqnarray*}
  \mathcal{D}\left(A^{*}\right)&=&\left\{y \in \mathcal{H}: \exists u \in \mathcal{H} ,\langle A x, y\rangle=\langle x, u\rangle\right., \forall x \in \mathcal{D}(A)\} , \\
 \langle A x, y\rangle &=&\left\langle x, A^{*} y\right\rangle, \forall x \in \mathcal{D}(A) \text { and }\forall y \in \mathcal{D}\left(A^{*}\right).
\end{eqnarray*}
\item  An unbounded operator $A$ on $\mathcal{H}$ is called self-adjoint if $A^*=A$, which means that $ \mathcal{D}(A^*)=\mathcal{D}(A) $ and $A^*f=Af$ for every $f$ in $\mathcal{D}(A)$. 
\end{enumerate}
\end{definition} 
\begin{Lemma} \label{de}   Let $A$ and $B$ be densely defined unbounded linear operators on $  \mathcal{H}$. Then
\begin{enumerate}
\item  If $AB$ is densely defined then $B^*A^*\subset(AB)^*$, the two operators are equal if
$A$ is bounded on $\mathcal{H}$.
\item $\mathcal{G}( A^*)=\lbrace W\mathcal{G}( A)\rbrace^\perp$, where  $W:\mathcal{H} \oplus \mathcal{H} \rightarrow \mathcal{H} \oplus \mathcal{H}$ is an unitary operator, defined by $W(f \oplus g)=g \oplus-f$.
\end{enumerate}
\end{Lemma}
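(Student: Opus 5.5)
We prove the two parts separately, working throughout with graphs in $\mathcal{H}\oplus\mathcal{H}$ and the definition of the adjoint.

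For part (2) we compute $\{W\mathcal{G}(A)\}^\perp$ directly. A pair $(y,z)$ lies in $\{W\mathcal{G}(A)\}^\perp$ exactly when it is orthogonal to $W(x\oplus Ax)=Ax\oplus(-x)$ for every $x\in\mathcal{D}(A)$. This means
$$\langle Ax,y\rangle-\langle x,z\rangle=0 \quad\text{for all } x\in\mathcal{D}(A).$$
By the definition of $A^*$, this says precisely that $y\in\mathcal{D}(A^*)$ and $z=A^*y$. Density of $\mathcal{D}(A)$ is what makes $z$ unique, so that $A^*$ is a well-defined operator. Hence $\{W\mathcal{G}(A)\}^\perp=\mathcal{G}(A^*)$. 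Finally, $W$ is unitary because it is isometric and invertible, with $W^{-1}(f\oplus g)=-g\oplus f$.

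For the inclusion in part (1), take $y\in\mathcal{D}(B^*A^*)$, so that $y\in\mathcal{D}(A^*)$ and $A^*y\in\mathcal{D}(B^*)$. For every $x\in\mathcal{D}(AB)$ we have $Bx\in\mathcal{D}(A)$, and therefore
$$\langle ABx,y\rangle=\langle Bx,A^*y\rangle=\langle x,B^*A^*y\rangle.$$
So $y\in\mathcal{D}((AB)^*)$ and $(AB)^*y=B^*A^*y$. The hypothesis that $AB$ is densely defined is needed only so that $(AB)^*$ exists.

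For equality when $A$ is bounded on $\mathcal{H}$, we have $\mathcal{D}(A^*)=\mathcal{H}$ and $A^*$ is bounded. Moreover $\mathcal{D}(AB)=\mathcal{D}(B)$, which is dense. It remains to show $\mathcal{D}((AB)^*)\subset\mathcal{D}(B^*A^*)$. Let $y\in\mathcal{D}((AB)^*)$. Then for all $x\in\mathcal{D}(B)$,
$$\langle Bx,A^*y\rangle=\langle ABx,y\rangle=\langle x,(AB)^*y\rangle.$$
This shows $A^*y\in\mathcal{D}(B^*)$ with $B^*A^*y=(AB)^*y$, which gives the reverse inclusion.

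The only delicate point is the domain bookkeeping in the equality case. We must use boundedness of $A$ to get both that $\mathcal{D}(A^*)$ is all of $\mathcal{H}$ and that $\mathcal{D}(AB)=\mathcal{D}(B)$. Without these two facts only the inclusion $B^*A^*\subset(AB)^*$ holds.
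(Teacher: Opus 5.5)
Your proof is correct and complete. Part (2) identifies $\{W\mathcal{G}(A)\}^\perp$ with $\mathcal{G}(A^*)$ directly from the definition of the adjoint. Part (1) is the standard domain chase, and you rightly use that $A$ bounded and everywhere defined gives both $\mathcal{D}(A^*)=\mathcal{H}$ and $\mathcal{D}(AB)=\mathcal{D}(B)$. The paper gives no proof of this lemma---it is quoted as standard background from \cite{B,E,k}---and your argument is essentially the textbook one found there.
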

\begin{Lemma}\label{sim}
Let $\mathcal{H}_1$ and $\mathcal{H}_2$ be Hilbert spaces, and let $U$ be a unitary operator that goes from $\mathcal{H}_1$ to $\mathcal{H}_2$. If $A : \mathcal{D}(A) \rightarrow \mathcal{H}_1$ is an operator on $\mathcal{H}_1$, examine the operator on $\mathcal{H}_2$, $A_2 : \mathcal{D}(A_2) \rightarrow \mathcal{H}_2$ with $A_2 := UA_1U^{-1}$, $\mathcal{D}(A_2) := U\mathcal{D}(A_1)$, and  
$$\mathcal{G}( UA_1U^{-1})=\lbrace f\oplus g \in \mathcal{H}_2\oplus \mathcal{H}_2 :(U^{-1}f\oplus U^{-1}g)\in \mathcal{G}( A_1)\rbrace.$$
Then $A_2$ is  densely defined, closed , closable, or self-adjoint if and only if $A_1$ is alike.
\end{Lemma}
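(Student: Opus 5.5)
The key observation is that conjugating by the unitary $U$ carries graphs to graphs. Define $\widetilde{U}:\mathcal{H}_1\oplus\mathcal{H}_1\to\mathcal{H}_2\oplus\mathcal{H}_2$ by $\widetilde{U}(f\oplus g)=Uf\oplus Ug$. This is unitary, with inverse $U^{-1}\oplus U^{-1}$, and by the displayed description of the graph we have $\mathcal{G}(A_2)=\widetilde{U}\,\mathcal{G}(A_1)$. Every property in the statement is a property of the graph or of the domain, and each such property is preserved under a unitary map. We also have $A_1=U^{-1}A_2U$, so it is enough to prove each implication in the direction $A_1\Rightarrow A_2$.

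\emph{Density.} Since $U$ is a homeomorphism of $\mathcal{H}_1$ onto $\mathcal{H}_2$, $\overline{U\mathcal{D}(A_1)}=U\overline{\mathcal{D}(A_1)}$. Hence $\mathcal{D}(A_2)$ is dense exactly when $\mathcal{D}(A_1)$ is.

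\emph{Closedness and closability.} Because $\widetilde{U}$ is a homeomorphism, $\overline{\mathcal{G}(A_2)}=\widetilde{U}\,\overline{\mathcal{G}(A_1)}$. For closedness, this gives that $\mathcal{G}(A_2)$ is closed iff $\mathcal{G}(A_1)$ is. For closability, a closed subspace $\mathcal{G}$ is a graph iff $0\oplus g\in\mathcal{G}$ forces $g=0$. Since $\widetilde{U}(0\oplus g)=0\oplus Ug$ and $U$ is injective, this property passes between $\overline{\mathcal{G}(A_1)}$ and $\widetilde{U}\,\overline{\mathcal{G}(A_1)}$. Along the way we get $\overline{A_2}=U\overline{A_1}U^{-1}$.

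\emph{Self-adjointness.} This is the step requiring the most care. Use Lemma \ref{de}(2), with $W_j$ the flip operator on $\mathcal{H}_j\oplus\mathcal{H}_j$. A direct check gives $\widetilde{U}W_1=W_2\widetilde{U}$. A unitary map carries orthogonal complements to orthogonal complements, so
\[
\mathcal{G}(A_2^*)=\bigl(W_2\widetilde{U}\mathcal{G}(A_1)\bigr)^\perp=\bigl(\widetilde{U}W_1\mathcal{G}(A_1)\bigr)^\perp=\widetilde{U}\bigl(W_1\mathcal{G}(A_1)\bigr)^\perp=\widetilde{U}\,\mathcal{G}(A_1^*).
\]
Thus $A_2^*=UA_1^*U^{-1}$, where density of $\mathcal{D}(A_2)$ comes from the density step. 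Finally, $A_2=A_2^*$ iff $\widetilde{U}\mathcal{G}(A_1)=\widetilde{U}\mathcal{G}(A_1^*)$, which holds iff $\mathcal{G}(A_1)=\mathcal{G}(A_1^*)$ since $\widetilde{U}$ is injective. That is exactly $A_1=A_1^*$.
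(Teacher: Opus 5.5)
Your proof is correct and complete. The unitary $U\oplus U$ carries $\mathcal{G}(A_1)$ onto $\mathcal{G}(A_2)$ and intertwines the flips $W_1$ and $W_2$, which settles density, closedness and closability (with $\overline{A_2}=U\overline{A_1}U^{-1}$), and, through Lemma \ref{de}(2), gives $A_2^*=UA_1^*U^{-1}$ and hence self-adjointness. The paper gives no proof of its own for this lemma; it lists it among standard facts and refers to \cite{B,E,k}, and your graph-transport argument is the standard one.
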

We are now going to talk about multiplication operators, which are very important to the progress of our outcomes.
\begin{definition}[Multiplication operator] Let $(\Omega, \mathcal{A}, \mu)$ be a measure space characterized by a $\sigma$-finite measure.  We define the essential range of a measurable function $\varphi:\Omega\rightarrow \mathbb{C}$ as 
$$ \varphi_{ess}(\Omega)=\lbrace \lambda \in \mathbb{C} :\forall \varepsilon > 0, \mu( \lbrace s\in \Omega:|\varphi(s)-\lambda|<\varepsilon\rbrace)> 0\rbrace,$$
and the associated multiplication operator $M_\varphi$ by
\begin{eqnarray*}
 \mathcal{D}(M_\varphi)&=&  \lbrace f \in L^2(\Omega,\mu) :\varphi f\in L^2(\Omega,\mu)\rbrace,\\
 M_\varphi f &=&\varphi f,\quad f\in \mathcal{D}(M_\varphi).
\end{eqnarray*}
\end{definition}
These operators satisfy the following basic properties
\begin{Lemma}\label{mult}\cite{B,E}. Let $\varphi$ a measurable function. Let $M_\varphi$ be the multiplication operator on $L^2(\Omega,\mu)$ with domain $\mathcal{D}(M_\varphi)$. Then 
\begin{enumerate}
\item  $(M_\varphi, \mathcal{D}(M_\varphi))$ is closed and densely defined.
\item  $M_\varphi$ is bounded (with $\mathcal{D}(M_\varphi) = L^2(\Omega,\mu))$ if and only if the
function $\varphi\in L^\infty(\Omega,\mu)$, and $\|M_\varphi\|=\|\varphi\|_\infty$. 
\item $M_{\varphi_1}=M_{\varphi_2}$, if and only if $\varphi_1=\varphi_2$ $\mu-$a.e.
\item $M_\varphi^*=M_{\overline{\varphi}}$ (in particular, $M_{\varphi}$ is self-adjoint $\Leftrightarrow\varphi$ is real-valued).
\item  The spectrum of $ M_\varphi $ is the essential range of $\varphi$.
\item $\lambda$ is an eigenvalue of $ M_\varphi\Leftrightarrow \mu( \lbrace \varphi^{-1}(\lambda)\rbrace)>0$. The analogous eigenspace is $L^2(\lbrace\varphi= \lambda\rbrace,\mu)$, which is the space of all square-integrable functions that are supported by the set $\lbrace\varphi= \lambda\rbrace$ and are defined $\mu$-almost everywhere.
\item The operator $ M_\varphi $ has a bounded inverse if and only if $0 \notin
\varphi_{ess}(\Omega)$. In that case, one has 
$M_{\varphi}^{-1}= M_r$ for $r : \Omega \to \mathbb{C}$ defined by $r(z) :=1/\varphi(z)$ if $\varphi(z) \neq0$,
$0$ if $\varphi(z) = 0$.
\end{enumerate}
\end{Lemma}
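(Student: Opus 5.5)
The statement to prove is Lemma \ref{mult}, the list of basic properties of $M_\varphi$ on $L^2(\Omega,\mu)$. I will prove the items in order. The tools are the $\sigma$-finiteness of $\mu$ and the truncation sets $\Omega_n=\{|\varphi|\le n\}\cap E_n$, where $E_n$ is an increasing sequence of sets of finite measure with $\bigcup E_n=\Omega$.

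\emph{Items (1)--(3).} For density, take $f\in L^2$ and set $f_n=f\chi_{\Omega_n}$. Then $f_n\in\mathcal{D}(M_\varphi)$, and $f_n\to f$ by dominated convergence. For closedness, suppose $f_k\to f$ and $\varphi f_k\to g$ in $L^2$. Pass to a subsequence converging a.e.; this gives $\varphi f=g$ a.e., so $f\in\mathcal{D}(M_\varphi)$ and $M_\varphi f=g$. For (2), if $\varphi\in L^\infty$ the estimate $\|\varphi f\|\le\|\varphi\|_\infty\|f\|$ is immediate. Conversely, suppose $M_\varphi$ is bounded with norm $C$ and the set $\{|\varphi|>C+\varepsilon\}$ has positive measure. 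Take a subset $F$ of it with $0<\mu(F)<\infty$ and test on $\chi_F$; this gives a contradiction. The same test, together with the case where $\mathcal{D}(M_\varphi)=L^2$ is assumed (closed graph theorem), gives $\|M_\varphi\|=\|\varphi\|_\infty$. For (3), apply $M_{\varphi_1}$ and $M_{\varphi_2}$ to $\chi_{\Omega_n\cap\{|\varphi_2|\le n\}}$ and let $n\to\infty$.

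\emph{Item (4).} This is the main obstacle. The inclusion $M_{\overline\varphi}\subset M_\varphi^*$ is clear from Fubini-free pointwise computation. For the reverse, let $y\in\mathcal{D}(M_\varphi^*)$ with $M_\varphi^*y=w$. Test against $x=h\chi_{\Omega_n}$ for arbitrary $h\in L^2$. Then
\[
\int_{\Omega_n}\varphi h\overline y\,d\mu=\int_{\Omega_n}h\overline w\,d\mu .
\]
Hence $w=\overline\varphi y$ a.e. on each $\Omega_n$, and therefore a.e. on $\Omega$. This gives $\overline\varphi y=w\in L^2$, so $y\in\mathcal{D}(M_{\overline\varphi})$. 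The statement about self-adjointness then follows from (3).

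\emph{Items (5)--(7).} If $\lambda$ is not in the essential range, then $|\varphi-\lambda|\ge\varepsilon$ a.e., so $r=1/(\varphi-\lambda)$ is bounded. One checks that $M_r$ is a two-sided inverse of $M_{\varphi-\lambda}=M_\varphi-\lambda$, respecting domains: $M_r$ maps $L^2$ into $\mathcal{D}(M_\varphi)$ because $\varphi r=1+\lambda r$ is bounded. This proves (7) at $\lambda=0$ and one inclusion of (5).

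For the other inclusion, if $\lambda$ is in the essential range, pick $F_\varepsilon\subset\{|\varphi-\lambda|<\varepsilon\}$ of finite positive measure. The normalized indicators $\chi_{F_\varepsilon}/\mu(F_\varepsilon)^{1/2}$ satisfy $\|(M_\varphi-\lambda)f\|\le\varepsilon$. So no bounded inverse exists, and $\lambda$ lies in the spectrum.

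For (6), note that $(\varphi-\lambda)f=0$ a.e. holds exactly when $f$ vanishes a.e. off $\{\varphi=\lambda\}$. A nonzero such $f$ exists if and only if $\mu(\varphi^{-1}(\lambda))>0$, again using $\sigma$-finiteness to obtain a finite-measure subset. This identifies the eigenspace as $L^2(\{\varphi=\lambda\},\mu)$.
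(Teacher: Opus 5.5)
Your proof is correct. The paper gives no argument of its own for this lemma and simply cites Borthwick and Engel--Nagel; your approach is essentially the standard proof found there: truncation to the sets $\Omega_n$ where $\varphi$ is bounded and $\mu$ is finite, an a.e.-convergent subsequence for closedness, testing $M_\varphi^*$ against $h\chi_{\Omega_n}$, and normalized indicator functions as approximate eigenvectors for the spectrum. One small detail to add: in (7), the function $r$ of the statement agrees $\mu$-a.e. with your $1/\varphi$, because $0\notin\varphi_{ess}(\Omega)$ forces $\mu(\{\varphi=0\})=0$, so the two multiplication operators coincide by (3).
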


\section{Unbounded operators commuting with $S_u$ and $S_u^*$}
We begin this section with unbounded Toeplitz operators, which have been studied by Sarason \cite{sar3} and Su\'arez \cite{su}. Let $\varphi\in\mathcal{N}^+$, with $\varphi = \frac{b}{a}$.  The Toeplitz operator $T_\varphi$ with symbol $\varphi$ is defined as the operator that multiplies by $\varphi$ on the domain $\mathcal{D}(T_\varphi) = \{ f \in H^2 : \varphi f \in H^2 \} = aH^2$.
 The operator $T_\varphi$ is closed and densely defined, which means that its adjoint $T_\varphi^*$ is also closed and densely defined.
  The domain $ \mathcal{D}(T^*_\varphi)=\mathcal{H}(b)$, and the graph $  \mathcal{G}(T_\varphi^*)=\lbrace f\oplus g \in H^2\oplus H^2 :T_{\overline{b}}f=T_{\overline{a}}g\rbrace $. From the description of $ \mathcal{G}(T_\varphi^*)$, it follows that if $f$ is in $\mathcal{D}(T^*_\varphi)$, then $T^*_\varphi S^* f = S^*T^*_\varphi
f$. In what follows we denote $T_{\overline{\varphi}} =T_{\varphi}^* $. The reason for such a notation for $T_{\varphi}^* $ is explained in \cite{sar3}.

For truncated Toeplitz operators, the operator $T_{\overline{\varphi}}$ induces an operator on $K_u$, denoted by $A^u_{\overline{\varphi}}$, defined as $A^u_{\overline{\varphi}} = T_{\overline{\varphi}}/\mathcal{D}(T_{\overline{\varphi}}) \cap K_u$. For any $\varphi \in \mathcal{N}^+_u$, one can naturally define $\varphi(S_u) = ((va)(S_u))^{-1} b(S_u)$ as a closed operator on a dense domain in $K_u$.  There is a natural conjugation $C_u$ on the space $K_u$. We define $A^u_{\varphi}$ to be the transform of $A^u_{\overline{\varphi}}$ under the symmetry $C_u$. Thus, $\mathcal{D}(A^u_{\varphi})= C_u\mathcal{D}(A^u_{\overline{\varphi}})=\lbrace f  :C_uf\in \mathcal{H}(b)\cap K_u\rbrace$ and $A^u_\varphi C_uf=C_uA^u_{\overline{\varphi}}f$ for $f$ in $\mathcal{D}(A^u_{\overline{\varphi}})$.
In \cite{sar3}, Sarason extends the results of Su\'arez in \cite{su}, using a functional calculus approach
\begin{theorem} \cite{sar3}\label{sar}
$A$ is a closed operator that is densely defined in $K_u$, it commutes with $S_u$ ($S^*_u$) if and only if $A =\varphi(S_u)$. ( $A =\varphi^*(S^*_u)$, $\varphi^*(z)=\overline{\varphi(\overline{z})} $ ) where $\varphi\in \mathcal{N}^+_u$.
\end{theorem}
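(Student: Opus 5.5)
The plan is to prove the $S_u^*$ version first and obtain the $S_u$ version by transporting it through the conjugation $C_u$. The reason for this order is that Su\'arez's description of closed operators commuting with $S_u^*$ is already phrased in de Branges--Rovnyak terms.

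\emph{Sufficiency for $S_u^*$.} Let $\varphi=\frac{b}{va}\in\mathcal{N}^+_u$ be in canonical form, and consider $A^u_{\overline{\varphi}}=T_{\overline{\varphi}}|\mathcal{D}(T_{\overline{\varphi}})\cap K_u$.

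1. From the graph description $\mathcal{G}(T_\varphi^*)=\lbrace f\oplus g: T_{\overline{b}}f=T_{\overline{va}}g\rbrace$, this graph is closed and invariant under $S^*\oplus S^*$.
2. Intersecting with $K_u\oplus K_u$, which is also $S^*\oplus S^*$-invariant, gives a closed graph invariant under $S_u^*\oplus S_u^*$. So $A^u_{\overline{\varphi}}$ is closed and commutes with $S_u^*$.
3. For density of the domain, use $\mathrm{g.c.i.d.}(u,v)=1$. Then $T_{\overline{va}}$ restricted to the relevant pieces of $K_u$ has dense range, so $\mathcal{H}(b)\cap K_u$ is dense in $K_u$; this is the Su\'arez/Sarason density lemma.
4. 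Finally, identify $A^u_{\overline{\varphi}}$ with $\varphi^*(S_u^*)$. For $\varphi\in H^\infty$ this is immediate. In general, $\varphi^*(S_u^*)=(\widehat{va}(S_u^*))^{-1}\widehat b(S_u^*)$, and $\widehat{va}(S_u^*)=A^u_{\overline{va}}$ is injective with dense range because $\mathrm{g.c.i.d.}(u,va)=1$.

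\emph{Necessity for $S_u^*$.} Let $A$ be closed, densely defined, and commuting with $S_u^*$.

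1. Set $\mathcal{G}=\mathcal{G}(A)$. This is a closed subspace of $K_u\oplus K_u\subset H^2\oplus H^2$, invariant under $S_u^*\oplus S_u^*$, and hence under $S^*\oplus S^*$.
2. Its orthogonal complement in $H^2\oplus H^2$ is therefore invariant under $S\oplus S$, and it contains $uH^2\oplus uH^2$.
3. Apply the Beurling--Lax theorem for $H^2\oplus H^2$. It gives $\mathcal{G}^\perp=\Theta H^2_{\mathbb{C}^k}$ for an inner $2\times k$ matrix function $\Theta$.
4. Use that $A$ is single-valued, i.e. $(0,g)\in\mathcal{G}\Rightarrow g=0$, together with density of $\mathcal{D}(A)$. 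These force $k=1$ modulo the $u$-part, and they show that $\mathcal{G}$ is cut out by a single relation $\overline{\beta}f=\overline{\alpha}g$ with $|\alpha|^2+|\beta|^2=1$.
5. Show that $\alpha$ is outer times an inner factor $v$ coprime to $u$; density of the domain excludes a common inner factor with $u$. Then put $\varphi=\beta/\alpha\in\mathcal{N}^+_u$.

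This reduction is the step I expect to be the main obstacle. It is the delicate content of Su\'arez's theorem, and I would follow Sarason's treatment in \cite{sar3}, \cite{sar4} of graphs of closed $S^*$-commuting operators.

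\emph{The $S_u$ version.} Use the identity $C_uS_uC_u=S_u^*$. If $A$ is closed, densely defined, and commutes with $S_u$, then $C_uAC_u$ is closed and densely defined; the conjugate-linear analogue of Lemma~\ref{sim} applies since $C_u$ is an isometric involution. Moreover $C_uAC_u$ commutes with $S_u^*$. By the $S_u^*$ case, $C_uAC_u=A^u_{\overline{\varphi}}$. Hence $A=C_uA^u_{\overline{\varphi}}C_u=A^u_\varphi$ by the definition of $A^u_\varphi$ given above.

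It remains to check $A^u_\varphi=\varphi(S_u)=((va)(S_u))^{-1}b(S_u)$. For the $H^\infty$ factors we have $A^u_{h}=h(S_u)$, and $C_uh(S_u)C_u=h^*(S_u^*)$. Applying this to $h=b$ and $h=va$ and comparing domains gives the equality. The converse direction follows in the same way by transporting the sufficiency part through $C_u$.
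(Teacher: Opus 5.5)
The paper gives no proof of this statement; it is quoted from Sarason \cite{sar3}, extending Su\'arez \cite{su}. So deferring the hard direction is in line with the paper, and your sufficiency argument and the transfer through $C_u$ are essentially sound.

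Your necessity sketch, however, breaks at Step~4. Since $\mathcal{G}^\perp\supset uH^2\oplus uH^2$, Beurling--Lax necessarily gives a square $2\times 2$ inner function: for a single column $\theta$, $\theta H^2$ cannot contain both $u\oplus 0$ and $0\oplus u$. The mechanism that yields $k=1$ on $H^2$ is also unavailable. There, $\mathcal{G}^\perp=W^{-1}\mathcal{G}(A^*)$, so the second-coordinate projection is injective on $\mathcal{G}^\perp$; here that projection kills $uH^2\oplus 0$. Thus ``single-valuedness and density force $k=1$ modulo the $u$-part'' is the substance of the theorem, not a consequence of Beurling--Lax. What is missing is a multiplicity-one statement.

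One way to get it: let $\mathcal{M}=\mathcal{G}^\perp\ominus(uH^2\oplus uH^2)\subset K_u\oplus K_u$ and $X=(S_u\oplus S_u)|_{\mathcal{M}}$. The map $x\oplus y\mapsto y$ from $\mathcal{M}$ to $K_u$ intertwines $X$ with $S_u$. It is injective because $\mathcal{D}(A)$ is dense, and it has dense range because $\mathcal{G}$ contains no $0\oplus g$ with $g\neq 0$. Since $u(X)=0$, $X$ is of class $C_0$, and Jordan-model theory for $C_0$ contractions then makes $X$ quasi-similar to $S_u$, in particular cyclic. Let $m_1\oplus m_2$ be a cyclic vector and $F$ the outer function with $|F|^2=|m_1|^2+|m_2|^2$. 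Then $\beta=m_1/F$ and $\alpha=-m_2/F$ lie in $H^\infty$ with $|\alpha|^2+|\beta|^2=1$. Moreover $\mathcal{G}^\perp$ is the closed span of $uH^2\oplus uH^2$ and $\{\beta h\oplus(-\alpha h):h\in H^2\}$, so $\mathcal{G}=\{f\oplus g\in K_u\oplus K_u: T_{\overline{\beta}}f=T_{\overline{\alpha}}g\}$.

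In Step~5, it is single-valuedness, not density, that makes the inner factor of $\alpha$ prime to $u$. A common inner divisor $\theta$ of $\alpha$ and $u$ would put $0\oplus K_\theta$ into $\mathcal{G}$.

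Two smaller points in the sufficiency part. First, when $v$ is nonconstant, $\varphi\notin\mathcal{N}^+$, so $T_{\overline{\varphi}}$ is not defined on $H^2$. The set $\{f\oplus g\in H^2\oplus H^2: T_{\overline{b}}f=T_{\overline{va}}g\}$ is then not a graph, since it contains $0\oplus K_v$. So $A^u_{\overline{\varphi}}$ must be defined through its graph in $K_u\oplus K_u$, and you have to check single-valuedness there. That is exactly where $K_u\cap K_v=\{0\}$ and the injectivity of $T_{\overline{a}}$ ($a$ outer) enter. Second, for the same reason the domain is not $\mathcal{H}(b)\cap K_u$ in general. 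Density is most easily seen from the inclusion $A^u_{\overline{va}}K_u\subset\mathcal{D}(A^u_{\overline{\varphi}})$: for $f=A^u_{\overline{va}}h$, take $g=A^u_{\overline{b}}h$. Combine this with the injectivity of $A^u_{va}$, which holds because $\mathrm{g.c.i.d.}(u,v)=1$.
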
 
 We list a few interesting facts below in Sarason papers \cite{sar4,sar3}, in this following lemma
 :
 \begin{Lemma}\label{sarason} Let $\varphi\in \mathcal{N}^+_u$, then
     \begin{enumerate}
\item  The operators $A^u_\varphi$ and $A^u_{\overline{\varphi}}$ are adjoints of each other, with 
$$  \mathcal{G}( A^u_\varphi )=\lbrace f\oplus g \in K_{u}\oplus K_{u} :A^u_{b}f=A^u_{va}g\rbrace, $$ and $$\mathcal{G}( A^u_{\overline{\varphi}} )=\lbrace f\oplus g \in K_{u}\oplus K_{u} :A^u_{\overline{b}}f=A^u_{\overline{va}}g\rbrace .$$
\item The operators $\widehat{A}^u_\varphi:\mathcal{D}(\widehat{A}^u_\varphi)=A^u_{va}K_u\to K_u$, and $\widehat{A}^u_{\overline{\varphi}}:\mathcal{D}(\widehat{A}^u_{\overline{\varphi}})=A^u_{\overline{va}}K_u\to K_u$ are closable, their closures are   $A^u_\varphi$ and $A^u_{\overline{\varphi}}$, respectively, and $(\widehat{A}^u_\varphi)^*=A^u_{\overline{\varphi}}$, $(\widehat{A}^u_{\overline{\varphi}})^*=A^u_\varphi$.
\item Let $\psi\in H^\infty$, then
 $A^u_{\overline{\psi}} A^u_{\overline{\varphi}}f =A^u_{\overline{\varphi}} A^u_{\overline{\psi}}f=  A^u_{\overline{\varphi}\overline{\psi}}f$ for $f$ in $\mathcal{D}(A^u_{\overline{\varphi}})$.
\item If $\varphi$ has the  representation $\varphi =\frac{\psi}{va}$, where $\psi$ is in $H^\infty$, $A^u_{\overline{\varphi}}  A^u_{\overline{va}} h=A^u_{\overline{\psi}} h$ for $h$ in $K_u$. Moreover $A^u_{\overline{\varphi}}$ is the closure of its restriction to $A^u_{\overline{va}}K_u $.
\item
 $ A^u_{\overline{\varphi}}f =A^u_{1/\overline{v}} A^u_{\overline{b}/\overline{a}}f$ for $f$ in $\mathcal{D}(A^u_{\overline{\varphi}})$.
 \item Let $\varphi_1$ and $\varphi_2$ be two nonzero functions in $\mathcal{N}^+_u$ . Then $A^u_{\overline{\varphi_1}} = A^u_{\overline{\varphi_2}}$ if and only if $u$ divides $\varphi_1-\varphi_2$.
\end{enumerate}
 \end{Lemma}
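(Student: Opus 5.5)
The statement to prove is Lemma \ref{sarason}, a compilation of facts from Sarason's papers. I would organize the proof around the graph description in item (1), since the other items follow from it.

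\emph{Graph of $A^u_{\overline{\varphi}}$.} Write $\varphi=\frac{b}{va}$ in canonical form, with $\mathcal{D}(A^u_{\overline{\varphi}})=\mathcal{H}(b)\cap K_u$ as in the definition. By the description of $\mathcal{G}(T_{\overline{\varphi}})$, a pair $f\oplus g\in K_u\oplus K_u$ lies in $\mathcal{G}(A^u_{\overline{\varphi}})$ exactly when $T_{\overline b}f=T_{\overline{va}}g$. Since $K_u$ is invariant under co-analytic Toeplitz operators, this equation is equivalent to $A^u_{\overline b}f=A^u_{\overline{va}}g$. The reverse implication is the delicate one. Given the truncated equation, the difference $T_{\overline b}f-T_{\overline{va}}g$ lies in $uH^2$. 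I would use $\mathrm{g.c.i.d.}(u,v)=1$ to show that $A^u_{\overline{va}}$ has dense range and that $A^u_{va}$ is injective. Then the kernel of the truncated equation is handled through the Corona-type (Bezout) structure available because $a$ is outer and $v$ is coprime to $u$. This lets me recover $g'$ with $T_{\overline b}f=T_{\overline{va}}g'$ and $g'=g$.

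\emph{Adjointness.} By Lemma \ref{de}(2), it suffices to check that the $C_u$-transformed graph is $W\mathcal{G}(A^u_{\overline\varphi})^\perp$. Taking orthogonal complements of the relation $A^u_{\overline b}f=A^u_{\overline{va}}g$ turns it into a range statement: $\{(A^u_{va}h,A^u_{b}h):h\in K_u\}$, i.e.\ the graph of $\widehat{A}^u_\varphi$. The closure of this set is the graph of the adjoint. This gives (1) and (2) simultaneously, with closability automatic because the adjoint of a densely defined operator is closed. Density of $A^u_{va}K_u$ follows from the injectivity of $A^u_{\overline{va}}$, which holds by the g.c.i.d.\ criterion recalled earlier. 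Applying $C_u$ gives the conjugate statements.

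\emph{Remaining items.} Item (3) follows from the graph: if $A^u_{\overline b}f=A^u_{\overline{va}}g$, then multiplying by $A^u_{\overline\psi}$ and using commutativity of co-analytic truncated Toeplitz operators (they lie in $\mathcal{B}_u^\infty$, by Lemma \ref{pro10}) shows that $A^u_{\overline\psi}f\oplus A^u_{\overline\psi}g$ is in the graph. Item (4) is immediate: $A^u_{\overline{va}}h\oplus A^u_{\overline\psi}h$ satisfies the graph relation, since $\psi=\varphi va$ gives $A^u_{\overline b}A^u_{\overline{va}}=A^u_{\overline{va}}A^u_{\overline{\psi}}$ after cancelling the common factor; the closure claim is (2). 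Item (5) uses that $A^u_{\overline v}$ is invertible on $K_u$ when $\mathrm{g.c.i.d.}(u,v)=1$ and $v$ is inner; I expect this to require the coprimality via Bezout to get a bounded inverse, or else to interpret $A^u_{1/\overline v}$ as an unbounded inverse. For item (6), the graph equality reduces to $A^u_{\overline{b_1 v_2a_2-b_2v_1a_1}}=0$ on $K_u$, which is equivalent to $u$ dividing the numerator; the unit denominators then give $u\mid\varphi_1-\varphi_2$ in $\mathcal{N}^+_u$.

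\emph{Main obstacle.} The hardest step is the equivalence in (1) between the full Toeplitz relation and its truncated version, specifically the passage from the $K_u$-level relation back to an $H^2$-level relation. I would first attempt this via a model-space reduction using $u$ coprime to $va$, and otherwise fall back on the functional calculus of Theorem \ref{sar}.
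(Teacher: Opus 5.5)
The paper does not prove this lemma; it quotes it from Sarason. So everything rests on your argument, and it has a genuine gap at the adjointness step. That step is the real content of (1), (2) and the ``moreover'' in (4).

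Your orthogonal-complement computation is correct. It gives $(W\mathcal{G}(A^u_{\overline\varphi}))^\perp=\overline{\{A^u_{va}h\oplus A^u_bh:\ h\in K_u\}}$, i.e.\ $(A^u_{\overline\varphi})^*=\overline{\widehat A^u_\varphi}$, and dually $(\widehat A^u_\varphi)^*=A^u_{\overline\varphi}$. You then said it suffices to show this set equals $\mathcal{G}(A^u_\varphi)=\{f\oplus g:\ A^u_bf=A^u_{va}g\}$, but you never show it.

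Commutativity of analytic truncated Toeplitz operators only yields $\overline{\mathcal{G}(\widehat A^u_\varphi)}\subseteq\mathcal{G}(A^u_\varphi)$. The reverse inclusion says that $A^u_{va}K_u$ is a core for $A^u_\varphi$. This is not automatic for the range and kernel of commuting operators. Without it you get neither $A^u_\varphi=(A^u_{\overline\varphi})^*$ nor the closure statements, and your (4), which refers back to (2), becomes circular.

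What has to be proved is this: if $f,g\in K_u$, $A^u_bf=A^u_{va}g$, and $f\oplus g\perp A^u_{va}h\oplus A^u_bh$ for all $h\in K_u$, then $f=g=0$. This is where $\mathrm{g.c.i.d.}(u,v)=1$ and the outerness of $a$ are used. One way to do it:
\begin{enumerate}
\item Take $h=A^u_xf$ with $x\in H^\infty$. Since $A^u_bA^u_xf=A^u_xA^u_{va}g=A^u_{vax}g$, orthogonality gives $\int_{\mathbb T}vax\rho\,dm=0$, where $\rho=|f|^2+|g|^2$ and $m$ is normalized Lebesgue measure.
\item Also $\int_{\mathbb T}ux\rho\,dm=0$, because $A^u_{ux}=0$.
\item Hence $u\rho$ and $va\rho$ lie in $H^1_0=\{F\in H^1: F(0)=0\}$.
\item Compare inner factors in $va\cdot(u\rho)=u\cdot(va\rho)$. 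Using $\mathrm{g.c.i.d.}(u,v)=1$ and that $a$ is outer, $u$ divides the inner factor of $u\rho$, so $\rho\in H^1$.
\item A nonnegative $H^1$ function is constant. Since $a(0)\neq0$ and $u(0)$, $v(0)$ are not both zero, that constant is $0$.
\end{enumerate}

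The step you single out as the main obstacle is not one. Since $K_u$ is invariant under co-analytic Toeplitz operators, $A^u_{\overline\psi}f=T_{\overline\psi}f$ for $f\in K_u$. So $A^u_{\overline b}f=A^u_{\overline{va}}g$ and $T_{\overline b}f=T_{\overline{va}}g$ are literally the same equation. The difference lies in $K_u$, and nothing has to be recovered from $uH^2$.

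The real subtlety at that point is different. For nonconstant $v$ one has $\varphi=b/(va)\notin\mathcal{N}^+$, so the description of $\mathcal{G}(T_{\overline\varphi})$ does not apply. The domain of $A^u_{\overline\varphi}$ is then not $\mathcal{H}(b)\cap K_u$ but the generally smaller set $\{f\in\mathcal{H}(b)\cap K_u:\ A^u_{\overline b/\overline a}f\in A^u_{\overline v}K_u\}$. You must take $A^u_{\overline\varphi}=(A^u_{\overline v})^{-1}A^u_{\overline b/\overline a}$ as the definition (this is (5)) and derive the graph relation from the injectivity of $A^u_{\overline v}$.

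Also, $\mathrm{g.c.i.d.}(u,v)=1$ only makes $A^u_{\overline v}$ injective with dense range. A Bezout identity, or a bounded inverse, would require $\inf_{\mathbb D}(|u|+|v|)>0$. So the corona-type arguments you envisage for (1) and (5) are unavailable, and $A^u_{1/\overline v}$ must be read as the unbounded inverse.

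Finally, the last equality in (3) uses the graph relation for the non-canonical representation $\varphi\psi=b\psi/(va)$. You should therefore record that the graph description holds for any representation $\psi_1/\chi_1$ with $\chi_1$ coprime to $u$. This again follows from the injectivity of $A^u_{\overline{\chi_1}}$.
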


\section{Unbounded operators commuting with  $S_u^\alpha$}
We organize this section by classifying densely defined closed truncated Toeplitz operators that commute with $S_u^\alpha$ into four cases, depending on $\alpha$.

Let $\alpha=0$ or $\alpha=\infty$.  We have $S_u^0=S_u$, and  $S_u^\infty=S^*_u$. These two cases are studied in Sarason's articles \cite{sar4,sar3}. 

Let $\alpha\in\mathbb{D}$. Recall that $u_\alpha=\frac{u-\alpha}{1-\overline{\alpha}u}$, for $\alpha\in\mathbb{D}$ and $J_\alpha=M_{(1-|\alpha|^2)^{-1/2}(1-\overline{\alpha}u)}$ is an unitary map from $K_{u_\alpha}$ onto $K_{u}$. Note that $J^{-1}_\alpha=M_{(1-|\alpha|^2)^{1/2}(1-\overline{\alpha}u)^{-1}}$. It follows that $J^{-1}_\alpha S_u^\alpha J_\alpha=S_{u_\alpha} $ and $C_uJ_\alpha=J_\alpha C_{u_\alpha}$. By Proposition \ref{sim}, $A$ is commutes with $S_{u}^\alpha$, closed and densely defined operator in $K_{u}$, if and only if $B$ is commutes with $S_{u_\alpha}$, closed and densely defined operator in $K_{u_\alpha}$. Moreover $$A=J_\alpha BJ^{-1}_\alpha \text{ with domain}  \quad \mathcal{D}(A) = \lbrace f \in K_u : J^{-1}_\alpha f \in \mathcal{D}(B)\rbrace .$$
If $ \varphi $ is in $H^\infty$ we have $A^u_{\frac{\varphi}{1-\alpha\overline{u}}}=J_\alpha A^{u_\alpha}_\varphi J^{-1}_\alpha$  and $A^u_{\frac{\overline{\varphi}}{1-\overline{\alpha}u}} =J_\alpha A^{u_\alpha}_{\overline{\varphi}} J^{-1}_\alpha$ (see \cite{sed,sed2}). As we mentioned earlier, here is no obvious way to define the  $T_{\overline{\varphi}}$   for $\varphi\in\mathcal{N}^+$. Sarason justified and formalized the definition of $T_{\overline{\varphi}}$ in \cite{sar3}. Because his definitions are inherently tied to the operator's domain of $T^*_\varphi$, we proceed to give the following definitions:
$$A^u_{\frac{\varphi}{1-\alpha\overline{u}}} =J_\alpha A^{u_\alpha}_\varphi J^{-1}_\alpha \textit{ and } A^u_{\frac{\overline{\varphi}}{1-\overline{\alpha}u}} = J_\alpha A^{u_\alpha}_{\overline{\varphi}} J^{-1}_\alpha, \textit{ For } \varphi\in\mathcal{N}^+_{u_\alpha}.$$
\begin{Proposition}\label{pro2}Let  $\alpha\in\mathbb{D}$ and $\varphi\in\mathcal{N}^+_{u_\alpha}$.  Then 
\begin{enumerate}
\item the operator $A^u_{\frac{\varphi}{1-\alpha\overline{u}}}$ with domain $\mathcal{D}(A^u_{\frac{\varphi}{1-\alpha\overline{u}}}) = \lbrace f \in K_u : J^{-1}_\alpha f \in \mathcal{D}( A^{u_\alpha}_\varphi)\rbrace $, is closed and densely defined, commutes with $S_u^\alpha$, and with the graph  $$  \mathcal{G}(A^u_{\frac{\varphi}{1-\alpha\overline{u}}})=\lbrace f\oplus g \in K_u\oplus K_u :A^u_{\frac{b}{1-\alpha\overline{u}}}f=A^u_{\frac{va}{1-\alpha\overline{u}}}g\rbrace.$$
\item  The graph of $A^u_{\frac{\overline{\varphi}}{1-\overline{\alpha}u}} $ is    $$  \mathcal{G}(A^u_{\frac{\overline{\varphi}}{1-\overline{\alpha}u}})=\lbrace f\oplus g \in K_u\oplus K_u :A^u_{\frac{\overline{b}}{1-\overline{\alpha}u}}f =A^u_{\frac{\overline{va}}{1-\overline{\alpha}u}}g\rbrace. $$ 
\item  The operators $A^u_{\frac{\varphi}{1-\alpha\overline{u}}}$ and $A^u_{\frac{\overline{\varphi}}{1-\overline{\alpha}u}}$ are adjoints of each other.
\end{enumerate}
\end{Proposition}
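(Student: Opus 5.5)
The whole proposition is a transport of Lemma \ref{sarason} from $K_{u_\alpha}$ to $K_u$ through the unitary $J_\alpha$. By definition, $A^u_{\frac{\varphi}{1-\alpha\overline{u}}}=J_\alpha A^{u_\alpha}_\varphi J^{-1}_\alpha$ and $A^u_{\frac{\overline{\varphi}}{1-\overline{\alpha}u}}=J_\alpha A^{u_\alpha}_{\overline{\varphi}} J^{-1}_\alpha$. Since $\varphi\in\mathcal{N}^+_{u_\alpha}$, the operator $A^{u_\alpha}_\varphi$ is closed and densely defined, and by Theorem \ref{sar} it commutes with $S_{u_\alpha}$. Lemma \ref{sim} then gives at once that $A^u_{\frac{\varphi}{1-\alpha\overline{u}}}$ is closed and densely defined on the stated domain.

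For commutation, I would take $f$ in the domain and note that $J^{-1}_\alpha S_u^\alpha f = S_{u_\alpha}J^{-1}_\alpha f$. This vector lies in $\mathcal{D}(A^{u_\alpha}_\varphi)$ because $S_{u_\alpha}$ preserves that domain. Hence $S_u^\alpha f\in\mathcal{D}(A^u_{\frac{\varphi}{1-\alpha\overline{u}}})$, and
\[
A^u_{\frac{\varphi}{1-\alpha\overline{u}}}S_u^\alpha f=J_\alpha A^{u_\alpha}_\varphi S_{u_\alpha}J^{-1}_\alpha f=J_\alpha S_{u_\alpha}A^{u_\alpha}_\varphi J^{-1}_\alpha f=S_u^\alpha A^u_{\frac{\varphi}{1-\alpha\overline{u}}}f .
\]

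For the graphs, write the canonical form $\varphi=b/(va)$ relative to $u_\alpha$. Lemma \ref{sarason}(1) gives
\[
\mathcal{G}(A^{u_\alpha}_\varphi)=\lbrace F\oplus G: A^{u_\alpha}_bF=A^{u_\alpha}_{va}G\rbrace .
\]
Then $f\oplus g\in\mathcal{G}(A^u_{\frac{\varphi}{1-\alpha\overline{u}}})$ exactly when $J^{-1}_\alpha f\oplus J^{-1}_\alpha g$ satisfies this relation. Applying the injective map $J_\alpha$ to both sides, the condition becomes $J_\alpha A^{u_\alpha}_bJ^{-1}_\alpha f=J_\alpha A^{u_\alpha}_{va}J^{-1}_\alpha g$. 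Since $b,va\in H^\infty$ are bounded symbols, the intertwining identity $A^u_{\frac{\psi}{1-\alpha\overline{u}}}=J_\alpha A^{u_\alpha}_\psi J^{-1}_\alpha$ for $\psi\in H^\infty$ (quoted from \cite{sed,sed2}) turns this into $A^u_{\frac{b}{1-\alpha\overline{u}}}f=A^u_{\frac{va}{1-\alpha\overline{u}}}g$, which is item (1). Item (2) is obtained the same way from the second graph description in Lemma \ref{sarason}(1), using the conjugate intertwining identity $A^u_{\frac{\overline{\psi}}{1-\overline{\alpha}u}}=J_\alpha A^{u_\alpha}_{\overline{\psi}}J^{-1}_\alpha$ with $\psi=b$ and $\psi=va$.

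For item (3), Lemma \ref{sarason}(1) gives $(A^{u_\alpha}_\varphi)^*=A^{u_\alpha}_{\overline{\varphi}}$. For a unitary $J_\alpha$ and a densely defined $B$, one has $(J_\alpha BJ_\alpha^{-1})^*=J_\alpha B^*J_\alpha^{-1}$. This can be checked directly from the definition of the adjoint, or from Lemma \ref{de}(2), since $J_\alpha\oplus J_\alpha$ commutes with $W$ and preserves orthogonal complements. Hence $(A^u_{\frac{\varphi}{1-\alpha\overline{u}}})^*=A^u_{\frac{\overline{\varphi}}{1-\overline{\alpha}u}}$. Since both operators are closed, taking adjoints once more gives the reverse identity.

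The only delicate point is the bounded intertwining identities for $H^\infty$ symbols. These are cited from Sedlock and should be applied only to the bounded pieces $b$ and $va$, never to $\varphi$ itself. Everything else is formal unitary transport.
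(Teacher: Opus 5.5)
Your proposal is correct and follows essentially the same route as the paper: you transport Theorem \ref{sar} and the graph and adjoint descriptions of Lemma \ref{sarason}(1) from $K_{u_\alpha}$ to $K_u$ through the unitary $J_\alpha$, converting only the bounded symbols $b$ and $va$ with the $H^\infty$ intertwining identities. You are slightly more careful than the paper, since you explicitly check that $S_u^\alpha$ preserves the domain and justify $(J_\alpha BJ_\alpha^{-1})^*=J_\alpha B^*J_\alpha^{-1}$ via Lemma \ref{de}(2), whereas the paper only asserts these steps.
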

\begin{proof}Let $\varphi\in\mathcal{N}^+_{u_\alpha}$. (1) By Theorem \ref{sar}, $A^{u_\alpha}_\varphi$ is commutes with $S_{u_\alpha}$, closed and densely defined operator in $K_{u_\alpha}$, on $\mathcal{D}( A^{u_\alpha}_\varphi)$.  Moreover, by part (1) of Lemma \ref{sarason},   $$  \mathcal{G}( A^{u_\alpha}_\varphi )=\lbrace f\oplus g \in K_{u_\alpha}\oplus K_{u_\alpha} :A^{u_\alpha}_{b}f=A^{u_\alpha}_{va}g\rbrace. $$
Since $ J_\alpha $ is a unitary map from $K_{u_\alpha}$ onto $K_u$, the operators $A^u_{\frac{\varphi}{1-\alpha\overline{u}}}$ and $A^{u_\alpha}_\varphi$  are unitarily equivalent. For the graph of $A^u_{\frac{\varphi}{1-\alpha\overline{u}}}$, we have
\begin{eqnarray*}
  \mathcal{G}(A^u_{\frac{\varphi}{1-\alpha\overline{u}}})&=&\lbrace f\oplus g \in K_u\oplus K_u : J^{-1}_\alpha  f\oplus J^{-1}_\alpha g\in \mathcal{G}( A^{u_\alpha}_\varphi )\rbrace \\
  &=&\lbrace f\oplus g \in K_u\oplus K_u : A^{u_\alpha}_b J^{-1}_\alpha f= A^{u_\alpha}_{va} J^{-1}_\alpha g\rbrace \\
 &=& \lbrace f\oplus g \in K_u\oplus K_u :A^u_{\frac{b}{1-\alpha\overline{u}}}f=A^u_{\frac{va}{1-\alpha\overline{u}}}g\rbrace.
\end{eqnarray*}
We now prove that, $A^u_{\frac{\varphi}{1-\alpha\overline{u}}}$ commutes with $S_u^\alpha$. For $f\in\mathcal{D}(A^u_{\frac{\varphi}{1-\alpha\overline{u}}})$ $$A^u_{\frac{\varphi}{1-\alpha\overline{u}}} S_u^\alpha f=J_\alpha A^{u_\alpha}_\varphi S_{u_\alpha}J^{-1}_\alpha f=J_\alpha S_{u_\alpha}A^{u_\alpha}_\varphi J^{-1}_\alpha f=S_u^\alpha A^u_{\frac{\varphi}{1-\alpha\overline{u}}}  f.$$
(2) By part (1) of  Lemma \ref{sarason}, we have $  \mathcal{G}( A^{u_\alpha}_{\overline{\varphi}} )=\lbrace f\oplus g \in K_{u_\alpha}\oplus K_{u_\alpha} :A^{u_\alpha}_{\overline{b}}f= A^{u_\alpha}_{\overline{va}}g\rbrace $, and the operators $A^u_{\frac{\overline{\varphi}}{1-\overline{\alpha}u}}$ and $A^{u_\alpha}_{\overline{\varphi}}$  are unitarily equivalent. We get that
\begin{eqnarray*}
  \mathcal{G}(A^u_{\frac{\overline{\varphi}}{1-\overline{\alpha}u}})&=&\lbrace f\oplus g \in K_u\oplus K_u : A^{u_\alpha}_{\overline{b}} J^{-1}_\alpha f= A^{u_\alpha}_{\overline{va}} J^{-1}_\alpha g\rbrace \\
 &=& \lbrace f\oplus g \in K_u\oplus K_u :A^u_{\frac{\overline{b}}{1-\overline{\alpha}u}}f= A^u_{\frac{\overline{va}}{1-\overline{\alpha}u}}g\rbrace.
\end{eqnarray*}
(3) Since $J_\alpha$ is unitary, the operators  $ A^{u_\alpha}_\varphi $ and $ A^{u_\alpha}_{\overline{\varphi}} $ are adjoints of one another. By part 2) of Lemma \ref{sarason},  it follows that $ J_\alpha A^{u_\alpha}_{\varphi} J^{-1}_\alpha $ and $ J_\alpha A^{u_\alpha}_{\overline{\varphi}} J^{-1}_\alpha $ are also adjoints. Consequently,  $A^u_{\frac{\varphi}{1-\alpha\overline{u}}}$ and $A^u_{\frac{\overline{\varphi}}{1-\overline{\alpha}u}}$ are adjoints of each other if and only if this holds. 
\end{proof}
\begin{Proposition}\label{pro}Let $\alpha\in\mathbb{D}$, $\psi\in H^\infty$ and $\varphi\in\mathcal{N}^+_{u_\alpha}$. Then
\begin{enumerate}
\item $A^u_{\frac{\overline{\psi}}{1-\overline{\alpha}u}}A^u_{\frac{\overline{\varphi}}{1-\overline{\alpha}u}} f
  =A^u_{\frac{\overline{\varphi}}{1-\overline{\alpha}u}}A^u_{\frac{\overline{\psi}}{1-\overline{\alpha}u}}f= 
   J_\alpha A^{u_\alpha}_{\overline{\psi\varphi}} J^{-1}_\alpha f$, for $f$ in
    $\mathcal{D}(A^u_{\frac{\overline{\varphi}}{1-\overline{\alpha}u}})$.
    \item $A^u_{\frac{\psi}{1-\alpha\overline{u}}} A^u_{\frac{\varphi}{1-\alpha\overline{u}}} f
  =A^u_{\frac{\varphi}{1-\alpha\overline{u}}} A^u_{\frac{\psi}{1-\alpha\overline{u}}}f= 
   J_\alpha A^{u_\alpha}_{\psi\varphi} J^{-1}_\alpha f$, for $f$ in
    $\mathcal{D}(A^u_{\frac{\varphi}{1-\alpha\overline{u}}})$. 
\end{enumerate}

\end{Proposition}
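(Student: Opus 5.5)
The plan is to move everything to $K_{u_\alpha}$ by the unitary $J_\alpha$ and apply Sarason's Lemma \ref{sarason}(3) there. For $\psi\in H^\infty$ we have the bounded identities $A^u_{\frac{\psi}{1-\alpha\overline{u}}}=J_\alpha A^{u_\alpha}_\psi J^{-1}_\alpha$ and $A^u_{\frac{\overline{\psi}}{1-\overline{\alpha}u}}=J_\alpha A^{u_\alpha}_{\overline{\psi}}J^{-1}_\alpha$ (from \cite{sed,sed2}). For $\varphi\in\mathcal{N}^+_{u_\alpha}$ the corresponding identities hold by definition. We also note that $\psi\varphi\in\mathcal{N}^+_{u_\alpha}$. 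Indeed, with $\varphi=\frac{b}{va}$ we get $\psi\varphi=\frac{\psi b}{va}$ and the same denominator, so the relevant g.c.i.d. condition with $u_\alpha$ is unchanged. Hence $A^{u_\alpha}_{\overline{\psi\varphi}}$ and $A^{u_\alpha}_{\psi\varphi}$ make sense.

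For (1), let $f\in\mathcal{D}(A^u_{\frac{\overline{\varphi}}{1-\overline{\alpha}u}})$. By the definition of the domain (as in Proposition \ref{pro2}), $h:=J^{-1}_\alpha f\in\mathcal{D}(A^{u_\alpha}_{\overline{\varphi}})$. We then compute
$$A^u_{\frac{\overline{\psi}}{1-\overline{\alpha}u}}A^u_{\frac{\overline{\varphi}}{1-\overline{\alpha}u}}f=J_\alpha A^{u_\alpha}_{\overline{\psi}}J^{-1}_\alpha J_\alpha A^{u_\alpha}_{\overline{\varphi}}h=J_\alpha A^{u_\alpha}_{\overline{\psi}}A^{u_\alpha}_{\overline{\varphi}}h.$$
For the other order, Lemma \ref{sarason}(3) applied in $K_{u_\alpha}$ gives $A^{u_\alpha}_{\overline{\psi}}h\in\mathcal{D}(A^{u_\alpha}_{\overline{\varphi}})$. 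Hence $A^u_{\frac{\overline{\psi}}{1-\overline{\alpha}u}}f=J_\alpha A^{u_\alpha}_{\overline{\psi}}h$ lies in the domain of $A^u_{\frac{\overline{\varphi}}{1-\overline{\alpha}u}}$, and the reverse product equals $J_\alpha A^{u_\alpha}_{\overline{\varphi}}A^{u_\alpha}_{\overline{\psi}}h$. Lemma \ref{sarason}(3) then identifies both expressions with $J_\alpha A^{u_\alpha}_{\overline{\varphi\psi}}h$, which proves (1).

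For (2), we transport the same argument through the conjugation. Since $C_{u_\alpha}$ intertwines $A^{u_\alpha}_{\overline{\chi}}$ and $A^{u_\alpha}_{\chi}$ (by definition for $\chi\in\mathcal{N}^+_{u_\alpha}$, and by $C$-symmetry for $\chi\in H^\infty$), applying $C_{u_\alpha}$ to Lemma \ref{sarason}(3) yields
$$A^{u_\alpha}_\psi A^{u_\alpha}_\varphi g=A^{u_\alpha}_\varphi A^{u_\alpha}_\psi g=A^{u_\alpha}_{\psi\varphi}g,\qquad g\in\mathcal{D}(A^{u_\alpha}_\varphi)=C_{u_\alpha}\mathcal{D}(A^{u_\alpha}_{\overline{\varphi}}),$$
together with the domain invariance $A^{u_\alpha}_\psi\mathcal{D}(A^{u_\alpha}_\varphi)\subset\mathcal{D}(A^{u_\alpha}_\varphi)$. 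Conjugating by $J_\alpha$ exactly as in (1) then gives (2). Alternatively, the domain invariance follows from the commutation of $A^{u_\alpha}_\varphi$ with $S_{u_\alpha}$ (Theorem \ref{sar}) together with the fact that $A^{u_\alpha}_\psi=\psi(S_{u_\alpha})$ is a strong limit of polynomials in $S_{u_\alpha}$, using closedness.

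The main obstacle is the bookkeeping in the $C$-transport step. One has to check that $C_{u_\alpha}A^{u_\alpha}_{\overline{\psi}}C_{u_\alpha}=A^{u_\alpha}_\psi$ for bounded $\psi$, and that the symbol $\overline{\psi\varphi}$ coming from Lemma \ref{sarason}(3) is transported to $\psi\varphi$. Both points are immediate from the definitions. Everything else is unitary conjugation.
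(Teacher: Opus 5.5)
Your proof is correct and follows essentially the same route as the paper: conjugate by the unitary $J_\alpha$, apply Lemma \ref{sarason}(3) in $K_{u_\alpha}$, and obtain (2) from (1) through the conjugation. Your transport by $C_{u_\alpha}$ is equivalent to the paper's transport by $C_u$, since $C_uJ_\alpha=J_\alpha C_{u_\alpha}$. Your version is slightly more careful than the paper's: you correctly use $J^{-1}_\alpha f\in\mathcal{D}(A^{u_\alpha}_{\overline{\varphi}})$ where the paper writes $J_\alpha f\in\mathcal{D}(A^{u_\alpha}_{\varphi})$, and you make explicit the domain invariance and the membership $\psi\varphi\in\mathcal{N}^+_{u_\alpha}$, which the paper leaves implicit.
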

\begin{proof}   Since the two equalities are related via the $C_u$-transform, it is enough to establish the first one. Let $\psi\in H^\infty$ and $\varphi\in\mathcal{N}^+_{u_\alpha}$.
If $f \in \mathcal{D}(A^u_{\frac{\overline{\varphi}}{1-\overline{\alpha}u}})$ then $J_\alpha f\in \mathcal{D}( A^{u_\alpha}_\varphi)$, and by part 3) of Lemma \ref{sarason}, we have
\begin{eqnarray*}
    A^u_{\frac{\overline{\psi}}{1-\overline{\alpha}u}}A^u_{\frac{\overline{\varphi}}{1-\overline{\alpha}u}} f&=&J_{\alpha}A^{u_\alpha}_{\overline{\psi}} A^{u_\alpha}_{\overline{\varphi}} J_{\alpha}^{-1} f=J_{\alpha}A^{u_\alpha}_{\overline{\varphi}}A^{u_\alpha}_{\overline{\psi}} J_{\alpha}^{-1} f\\
    &=&A^u_{\frac{\overline{\varphi}}{1-\overline{\alpha}u}}A^u_{\frac{\overline{\psi}}{1-\overline{\alpha}u}}f=J_{\alpha}A^{u_\alpha}_{\overline{\psi\varphi}}  J_{\alpha}^{-1} f.
\end{eqnarray*}
\end{proof}
\begin{Proposition}Let $\alpha\in\mathbb{D}$. Let $\varphi_1$ and
 $\varphi_2$ be two nonzero functions in $\mathcal{N}^+_{u_\alpha}$ . Then
 \begin{enumerate}
\item  $A^u_{\frac{\overline{\varphi_1}}{1-\overline{\alpha}u}} =A^u_{\frac{\overline{\varphi_2}}{1-\overline{\alpha}u}}$ if and only if $u_\alpha$ divides $\varphi_1-\varphi_2$.
\item  $A^u_{\frac{\varphi_1}{1-\alpha\overline{u}}} = A^u_{\frac{\varphi_2}{1-\alpha\overline{u}}}$ if and only if $u_\alpha$ divides $\varphi_1-\varphi_2$.

\end{enumerate}

\end{Proposition}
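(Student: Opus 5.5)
The plan is to transport the statement to the model space $K_{u_\alpha}$ through the unitary $J_\alpha$ and apply part (6) of Lemma \ref{sarason} with $u_\alpha$ in place of $u$.

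By definition,
$$A^u_{\frac{\overline{\varphi_i}}{1-\overline{\alpha}u}}=J_\alpha A^{u_\alpha}_{\overline{\varphi_i}}J_\alpha^{-1},\qquad i=1,2,$$
with domains $J_\alpha\mathcal{D}(A^{u_\alpha}_{\overline{\varphi_i}})$. Since $J_\alpha$ is a unitary map from $K_{u_\alpha}$ onto $K_u$, conjugation $B\mapsto J_\alpha BJ_\alpha^{-1}$ is a bijection between (unbounded) operators on $K_{u_\alpha}$ and operators on $K_u$, which preserves domains and graphs in the sense of Lemma \ref{sim}. Equality of two unbounded operators means equality of their graphs, and the graph of the conjugate is the image of the original graph under $J_\alpha\oplus J_\alpha$. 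Hence
$$A^u_{\frac{\overline{\varphi_1}}{1-\overline{\alpha}u}}=A^u_{\frac{\overline{\varphi_2}}{1-\overline{\alpha}u}}\iff A^{u_\alpha}_{\overline{\varphi_1}}=A^{u_\alpha}_{\overline{\varphi_2}}.$$
Since $u_\alpha$ is a nonconstant inner function and $\varphi_1,\varphi_2\in\mathcal{N}^+_{u_\alpha}$ are nonzero, part (6) of Lemma \ref{sarason} applied to $u_\alpha$ shows that the right-hand condition holds if and only if $u_\alpha$ divides $\varphi_1-\varphi_2$. This proves (1).

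For (2), the same argument applies using $A^u_{\frac{\varphi_i}{1-\alpha\overline{u}}}=J_\alpha A^{u_\alpha}_{\varphi_i}J_\alpha^{-1}$. It therefore suffices to show $A^{u_\alpha}_{\varphi_1}=A^{u_\alpha}_{\varphi_2}\iff A^{u_\alpha}_{\overline{\varphi_1}}=A^{u_\alpha}_{\overline{\varphi_2}}$. Two routes are available.
\begin{enumerate}
\item By Proposition \ref{pro2}(3), or directly by Lemma \ref{sarason}(1), $A^{u_\alpha}_{\varphi_i}$ and $A^{u_\alpha}_{\overline{\varphi_i}}$ are adjoints of each other. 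These operators are closed and densely defined, so taking adjoints is injective on them, and equality of one pair is equivalent to equality of the other.
\item Alternatively, $A^{u_\alpha}_{\varphi}=C_{u_\alpha}A^{u_\alpha}_{\overline{\varphi}}C_{u_\alpha}$ by the definition of $A^{u_\alpha}_\varphi$ as the $C_{u_\alpha}$-transform, and conjugation by $C_{u_\alpha}$ is a bijection on operators.
\end{enumerate}
Either route reduces (2) to (1).

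The only delicate point is verifying that the domains match exactly under conjugation. This holds because the domains are defined as $J_\alpha$- (respectively $C_{u_\alpha}$-) images of each other. With that checked, the proof is a direct reduction, and Lemma \ref{sarason}(6) carries the real content.
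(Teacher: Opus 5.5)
Your proposal is correct and follows the paper's proof. You transport through the unitary $J_\alpha$ to reduce (1) to $A^{u_\alpha}_{\overline{\varphi_1}}=A^{u_\alpha}_{\overline{\varphi_2}}$ and then apply Lemma \ref{sarason}(6) to the inner function $u_\alpha$; for (2), the paper reduces to (1) by a $C_u$-transform on $K_u$ (implicitly using $C_uJ_\alpha=J_\alpha C_{u_\alpha}$), while you do the same reduction on $K_{u_\alpha}$ via adjoints or $C_{u_\alpha}$, which is equivalent.
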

\begin{proof}The two equalities are $C_u$-transforms of each other, therefore it is enough to show the first one.
Let $\varphi_1,\varphi_2\in\mathcal{N}^+_{u_\alpha}$ . Since $ J_\alpha $ is unitary, it follows that $A^u_{\frac{\overline{\varphi_1}}{1-\overline{\alpha}u}} =A^u_{\frac{\overline{\varphi_2}}{1-\overline{\alpha}u}}$ if and only if  $ A^{u_\alpha}_{\overline{\varphi_1}}=A^{u_\alpha}_{\overline{\varphi_2}} $, but by part 6) of Lemma \ref{sarason} the latter is true if and only if  $u_\alpha$ divides $\varphi_1-\varphi_2$.
\end{proof}
For $\varphi\in\mathcal{N}^+_{u_\alpha}$, let $\widehat{A}^u_{\frac{\varphi}{1-\alpha\overline{u}}}$ be the operator with domain $A^u_{\frac{va}{1-\alpha\overline{u}}}K_u$  given by $$\widehat{A}^u_{\frac{\varphi}{1-\alpha\overline{u}}}A^u_{\frac{va}{1-\alpha\overline{u}}}h=A^u_{\frac{b}{1-\alpha\overline{u}}}h, h\in K_u.$$
Denoted by $\widehat{A}^u_{\frac{\overline{\varphi}}{1-\overline{\alpha}u}}$, coincides
with its transform under the conjugation $C_u$,  $\widehat{A}^u_{\frac{\varphi}{1-\alpha\overline{u}}}: = C_u\widehat{A}^u_{\frac{\overline{\varphi}}{1-\overline{\alpha}u}}C_u$ with domain $\mathcal{D}(\widehat{A}^u_{\frac{\overline{\varphi}}{1-\overline{\alpha}u}}) =C_u\mathcal{D}(\widehat{A}^u_{\frac{\varphi}{1-\alpha\overline{u}}})$.
The proof of the following proposition follows the approach of Sarason \cite{sar3,sar4}, suitably adapted to our context.
\begin{Proposition}Let $\alpha\in\mathbb{D}$ and $\varphi\in\mathcal{N}^+_{u_\alpha}$, we have
\begin{enumerate}
\item $A^u_{\frac{va}{1-\alpha\overline{u}}}K_u$ is dense in $K_u $.
\item $\widehat{A}^u_{\frac{\varphi}{1-\alpha\overline{u}}} \subset(\widehat{A}^u_{\frac{\overline{\varphi}}{1-\overline{\alpha}u}})^{*}$ and $\widehat{A}^u_{\frac{\overline{\varphi}}{1-\overline{\alpha}u}} \subset(\widehat{A}^u_{\frac{\varphi}{1-\alpha\overline{u}}})^{*}$.
\item  $A^u_{\frac{\overline{\varphi}}{1-\overline{\alpha}u}} =(\widehat{A}^u_{\frac{\varphi}{1-\alpha\overline{u}}})^{*}$ and $A^u_{\frac{\varphi}{1-\alpha\overline{u}}} =(\widehat{A}^u_{\frac{\overline{\varphi}}{1-\overline{\alpha}u}})^{*}$.
\item The operators $A^u_{\frac{\varphi}{1-\alpha\overline{u}}} $ and $A^u_{\frac{\overline{\varphi}}{1-\overline{\alpha}u}}$ are  the
respective closures of $\widehat{A}^u_{\frac{\varphi}{1-\alpha\overline{u}}}$
and $\widehat{A}^u_{\frac{\overline{\varphi}}{1-\overline{\alpha}u}}$.
\end{enumerate}
\end{Proposition}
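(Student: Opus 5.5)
The whole statement transfers through the unitary $J_\alpha:K_{u_\alpha}\to K_u$, using part (2) of Lemma \ref{sarason} for the inner function $u_\alpha$. The first step is to identify the hatted operators with conjugates of Sarason's hatted operators. By the identity $A^u_{\frac{\psi}{1-\alpha\overline{u}}}=J_\alpha A^{u_\alpha}_\psi J_\alpha^{-1}$ for $\psi\in H^\infty$, applied to $\psi=va$ and $\psi=b$, we get
\[
A^u_{\frac{va}{1-\alpha\overline{u}}}K_u=J_\alpha A^{u_\alpha}_{va}J_\alpha^{-1}K_u=J_\alpha A^{u_\alpha}_{va}K_{u_\alpha}.
\]
Moreover, for $h\in K_u$ the defining relation reads $\widehat{A}^u_{\frac{\varphi}{1-\alpha\overline{u}}}J_\alpha A^{u_\alpha}_{va}J_\alpha^{-1}h=J_\alpha A^{u_\alpha}_bJ_\alpha^{-1}h$. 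This shows that $\widehat{A}^u_{\frac{\varphi}{1-\alpha\overline{u}}}=J_\alpha\widehat{A}^{u_\alpha}_\varphi J_\alpha^{-1}$, including equality of domains.

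For the conjugate operator I use $C_uJ_\alpha=J_\alpha C_{u_\alpha}$. This gives
\[
\widehat{A}^u_{\frac{\overline{\varphi}}{1-\overline{\alpha}u}}=C_uJ_\alpha\widehat{A}^{u_\alpha}_\varphi J_\alpha^{-1}C_u=J_\alpha C_{u_\alpha}\widehat{A}^{u_\alpha}_\varphi C_{u_\alpha}J_\alpha^{-1}=J_\alpha\widehat{A}^{u_\alpha}_{\overline{\varphi}}J_\alpha^{-1}.
\]
Here I am using Sarason's convention that $\widehat{A}^{u_\alpha}_{\overline\varphi}$ is the $C_{u_\alpha}$-transform of $\widehat{A}^{u_\alpha}_\varphi$, and that its domain is $A^{u_\alpha}_{\overline{va}}K_{u_\alpha}=C_{u_\alpha}A^{u_\alpha}_{va}K_{u_\alpha}$, since truncated Toeplitz operators are $C$-symmetric. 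I would check this last point carefully, because the paper's definition of the conjugate hatted operator is stated somewhat circularly, and this identification is the main place where care is needed.

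Once both identifications are in place, the four items follow directly.
\begin{enumerate}
\item $A^{u_\alpha}_{va}K_{u_\alpha}$ is dense in $K_{u_\alpha}$, because $\mathcal{D}(\widehat{A}^{u_\alpha}_\varphi)$ is dense: it is a densely defined closable operator by Lemma \ref{sarason}(2). Alternatively, $A^{u_\alpha}_{va}$ is injective, since $a$ is outer and $g.c.i.d.(v,u_\alpha)=1$, and it is $C$-symmetric, so its range is dense. Since $J_\alpha$ is unitary, item (1) follows.
\item Lemma \ref{sarason}(2) gives $(\widehat{A}^{u_\alpha}_{\overline\varphi})^*=A^{u_\alpha}_\varphi\supset\widehat{A}^{u_\alpha}_\varphi$, and symmetrically for the other inclusion. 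Conjugating by $J_\alpha$ gives item (2). This uses the fact that $(UTU^{-1})^*=UT^*U^{-1}$ for unitary $U$, which follows from Lemma \ref{de}(1) or directly from the graph description of Lemma \ref{sim}.
\item Conjugate the equalities $(\widehat{A}^{u_\alpha}_\varphi)^*=A^{u_\alpha}_{\overline\varphi}$ and $(\widehat{A}^{u_\alpha}_{\overline\varphi})^*=A^{u_\alpha}_\varphi$ by $J_\alpha$. Then use the definitions $A^u_{\frac{\varphi}{1-\alpha\overline u}}=J_\alpha A^{u_\alpha}_\varphi J_\alpha^{-1}$ and $A^u_{\frac{\overline\varphi}{1-\overline\alpha u}}=J_\alpha A^{u_\alpha}_{\overline\varphi}J_\alpha^{-1}$.
\item Unitary conjugation maps graphs to graphs via $J_\alpha\oplus J_\alpha$, which is a homeomorphism of $K_{u_\alpha}\oplus K_{u_\alpha}$ onto $K_u\oplus K_u$. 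It therefore preserves closures, as in Lemma \ref{sim}. Since $A^{u_\alpha}_\varphi$ and $A^{u_\alpha}_{\overline\varphi}$ are the closures of the corresponding hatted operators, item (4) follows.
\end{enumerate}

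Alternatively, one can derive item (4) from item (3) without using closures in $K_{u_\alpha}$. By item (3), $\overline{\widehat{A}}=\widehat{A}^{**}=\bigl(A^u_{\frac{\overline{\varphi}}{1-\overline{\alpha}u}}\bigr)^*$, and this equals $A^u_{\frac{\varphi}{1-\alpha\overline u}}$ by Proposition \ref{pro2}(3).
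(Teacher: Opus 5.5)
Your proposal is correct, but for items (2) and (3) it takes a different route from the paper.

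\textbf{The paper's route.} Following Sarason, the paper works directly in $K_u$.
\begin{enumerate}
\item For (2) it checks $\langle F_1,WF_2\rangle=0$ for $F_1=A^u_{\frac{va}{1-\alpha\overline u}}h_1\oplus A^u_{\frac{b}{1-\alpha\overline u}}h_1$ and $F_2=A^u_{\frac{\overline{va}}{1-\overline\alpha u}}h_2\oplus A^u_{\frac{\overline b}{1-\overline\alpha u}}h_2$. The key fact is that $A^u_{\frac{b}{1-\alpha\overline u}}$ and $A^u_{\frac{va}{1-\alpha\overline u}}$ commute in $\mathcal{B}^\alpha_u$, by multiplicativity of $\Psi\mapsto\Psi(S^\alpha_u)$.
\item For (3) it computes $[W\mathcal{G}(\widehat A)]^\perp=\{f\oplus g:A^u_{\frac{\overline b}{1-\overline\alpha u}}f=A^u_{\frac{\overline{va}}{1-\overline\alpha u}}g\}$ and recognizes this as the graph in Proposition \ref{pro2}(2).
\end{enumerate}
Its (1) is your second argument read through the adjoint: injectivity of $A^{u_\alpha}_{\overline{va}}$ gives dense range of $A^{u_\alpha}_{va}$, which is then transported by $J_\alpha$. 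Its (4) is exactly your alternative, $\overline{\widehat A}=\widehat A^{**}$ combined with (3) and Proposition \ref{pro2}(3).

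\textbf{Your route.} You show once that both hatted operators are $J_\alpha$-conjugates of Sarason's. Your check of the conjugate one, via $C_uJ_\alpha=J_\alpha C_{u_\alpha}$ and $C_uA^u_{\frac{va}{1-\alpha\overline u}}C_u=A^u_{\frac{\overline{va}}{1-\overline\alpha u}}$, is right. It also settles the paper's backwards-written definition of $\widehat{A}^u_{\frac{\overline{\varphi}}{1-\overline{\alpha}u}}$. After that, every item is a unitary transport of Lemma \ref{sarason}(2).

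\textbf{Comparison.} Your route is shorter and makes explicit that the proposition is a corollary of Sarason's result. The paper's computation is more hands-on and shows concretely why the graphs are orthogonal. However, it needs the algebra structure of $\mathcal{B}^\alpha_u$ as an extra input, and it still relies on the transported graph description in Proposition \ref{pro2}(2).

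\textbf{Two small points.}
\begin{enumerate}
\item Your justification of (1) is sounder than the paper's. The paper's stated reason, $u_\alpha H^2\cap vaH^2=\{0\}$, is false as written, since the intersection contains $u_\alpha vaH^2$. What is true is $\ker A^{u_\alpha}_{\overline{va}}=K_{u_\alpha}\cap K_v=\{0\}$.
\item For $(J_\alpha TJ_\alpha^{-1})^*=J_\alpha T^*J_\alpha^{-1}$, rely on the graph argument. Lemma \ref{de}(1) as stated only gives equality when the bounded factor is on the left.
\end{enumerate}
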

\begin{proof}1)  Because $g.c.i.d.(u_\alpha, va) = 1$ and   $J_\alpha$ is unitary, We have $u_\alpha H^2 \cap vaH^2 = \lbrace 0\rbrace$, therefore  $Ker(A^{u_\alpha}_{\overline{va}})=\lbrace 0\rbrace$. So, its adjoint $A^{u_\alpha}_{va}$ has a dense range, which means that $A^u_{\frac{va}{1-\alpha\overline{u}}}K_u$ is also dense in $K_u$.\\
2) Since the two inclusions are $C_u$-transforms of one another, proving the first suffices. This means that $\mathcal{G}(\widehat{A}^u_{\frac{\varphi}{1-\alpha\overline{u}}})$ and $W\mathcal{G}(\widehat{A}^u_{\frac{\overline{\varphi}}{1-\overline{\alpha}u}})$ are orthogonal to each other. For $h_1$ and $h_2$ in $K_{u}$, the vector $F_{1}=A^u_{\frac{va}{1-\alpha\overline{u}}} h_{1} \oplus A^u_{\frac{b}{1-\alpha\overline{u}}} h_{1}$  in $\mathcal{G}(\widehat{A}^u_{\frac{\varphi}{1-\alpha\overline{u}}})$, and the vector $F_{2}=A^u_{\frac{\overline{va}}{1-\overline{\alpha}u}} h_2\oplus A^u_{\frac{\overline{b}}{1-\overline{\alpha}u}}h_2$  in $\widehat{A}^u_{\frac{\overline{\varphi}}{1-\overline{\alpha}u}}$, from which it follows that
$$
\begin{aligned}
\left\langle F_{1}, W F_{2}\right\rangle & =\left\langle A^u_{\frac{va}{1-\alpha\overline{u}}} h_{1}, A^u_{\frac{\overline{b}}{1-\overline{\alpha}u}}h_2\right\rangle-\left\langle A^u_{\frac{b}{1-\alpha\overline{u}}} h_{1}, A^u_{\frac{\overline{va}}{1-\overline{\alpha}u}} h_2\right\rangle \\
& =\left\langle  A^u_{\frac{b}{1-\alpha\overline{u}}}A^u_{\frac{va}{1-\alpha\overline{u}}} h_{1}, h_2\right\rangle-\left\langle A^u_{\frac{va}{1-\alpha\overline{u}}}A^u_{\frac{b}{1-\alpha\overline{u}}} h_{1}, h_2\right\rangle \\
& =\left\langle A^u_{\frac{bva}{1-\alpha\overline{u}}} h_{1}, h_2\right\rangle-\left\langle A^u_{\frac{vab}{1-\alpha\overline{u}}} h_{1}, h_2\right\rangle=0,
\end{aligned}
$$
from which the result follows. Moreover $\widehat{A}^u_{\frac{\varphi}{1-\alpha\overline{u}}}$ is closable.\\
3) Since $C_uJ_\alpha=J_\alpha C_{u_\alpha}$, The two claims are $C_u$-transforms of each other, thus it will be enough to show that the first one is true. The typical vector in $\mathcal{G}(\widehat{A}^u_{\frac{\varphi}{1-\alpha\overline{u}}})$ equals $A^u_{\frac{v a}{1-\alpha\overline{u}}}h \oplus A^u_{\frac{b}{1-\alpha\overline{u}}}  h$ with $h$ in $K_{u}$. Hence, the vector $f \oplus g$ in $K_{u} \oplus K_{u}$ is in $\mathcal{G}((\widehat{A}^u_{\frac{\varphi}{1-\alpha\overline{u}}})^{*})$ if and only if, for all $h$ in $K_{u}$,
$$
\begin{aligned}
0 & =\left\langle f \oplus g, W\left(A^u_{\frac{v a}{1-\alpha\overline{u}}}h \oplus A^u_{\frac{b}{1-\alpha\overline{u}}}  h\right)\right\rangle \\
& =\left\langle f, A^u_{\frac{b}{1-\alpha\overline{u}}}  h\right\rangle-\left\langle g, A^u_{\frac{v a}{1-\alpha\overline{u}}}h\right\rangle \\
& =\left\langle A^u_{\frac{\overline{b}}{1-\overline{\alpha}u}} f-A^u_{\frac{\overline{va}}{1-\overline{\alpha}u}} g, h\right\rangle,
\end{aligned}
$$
if and only if $A^u_{\frac{\overline{b}}{1-\overline{\alpha}u}} f=A^u_{\frac{\overline{va}}{1-\overline{\alpha}u}} g$, and by proposition \ref{pro2},  $f \oplus g$ in $\mathcal{G}(A^u_{\frac{\overline{\varphi}}{1-\overline{\alpha}u}})$.\\
4) The two statements are proved similarly; thus, establishing the first. Since $\widehat{A}^u_{\frac{\varphi}{1-\alpha\overline{u}}}$ is closable, we have 
$$\mathcal{G}(A^u_{\frac{\varphi}{1-\alpha\overline{u}}}) =\mathcal{G}((A^u_{\frac{\overline{\phi}}{1-\overline{\alpha}u}})^*) =\mathcal{G}((\widehat{A}^u_{\frac{\varphi}{1-\alpha\overline{u}}})^{**})=\overline{\mathcal{G}(\widehat{A}^u_{\frac{\varphi}{1-\alpha\overline{u}}})}.$$
\end{proof}
Let $\alpha\in\mathbb{C}/\overline{\mathbb{D}}$.  If $ A$ is densely defined then, by (1) of proposition \ref{de}, we have $A$ commutes with $S_u^\alpha$, then $A^*$ commutes with $S_u^{1/\overline{\alpha}}$, and so the above results can be applied to $A^*$ to get similar results for $A$. In which case we have
\begin{Proposition}Let $\alpha\in\mathbb{C}/\overline{\mathbb{D}}$. 
$A$ is a closed operator densely defined in $K_u$ commutes with $S^\alpha_u$ if
and only if $A =A^u_{\frac{\alpha\overline{\varphi}}{\alpha-u}}$ where $\varphi\in \mathcal{N}^+_{u_{(1/\overline{\alpha})}}$ , with domain $\mathcal{D}(A^u_{\frac{\alpha\overline{\varphi}}{\alpha-u}}) = \lbrace f \in K_u : \frac{\alpha f}{\alpha-u} \in \mathcal{D}( A^{{u_{(1/\overline{\alpha})}}}_{ \overline{\varphi}})\rbrace $, and $$  \mathcal{G}(A^u_{\frac{\alpha\overline{\varphi}}{\alpha-u}})=\lbrace f\oplus g \in K_u\oplus K_u :A^u_{\frac{\alpha\overline{b}}{\alpha-u}}f=A^u_{\frac{\alpha\overline{va}}{\alpha-u}}g\rbrace .$$
\end{Proposition}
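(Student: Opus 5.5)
The plan is to reduce to the case $|\beta|<1$ with $\beta=1/\overline{\alpha}\in\mathbb{D}$, using adjoints. Let $A$ be closed and densely defined on $K_u$, commuting with $S_u^\alpha$ in the sense of the definition, so that $S_u^\alpha\mathcal{D}(A)\subset\mathcal{D}(A)$ and $AS_u^\alpha=S_u^\alpha A$ there. By part (1) of Lemma \ref{de}, $(AS_u^\alpha)^*=(S_u^\alpha)^*A^*$, and since $S_u^\alpha\subset$-commutes with $A$ we also get $(S_u^\alpha)^*A^*\subset (S_u^\alpha A)^*$... and, more carefully, $A^*(S_u^\alpha)^*\supset (S_u^\alpha)^*A^*$. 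Hence $A^*$ is closed, densely defined (because $A$ is closed), and commutes with $(S_u^\alpha)^*$.

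The first step is to identify $(S_u^\alpha)^*$ with $S_u^{\beta}$ acting on the conjugate side. Part (1) of Lemma \ref{pro10} relates $\mathcal{B}_u^\alpha$ to $\mathcal{B}_u^{1/\overline\alpha}$, and $C_u S_u^\gamma C_u=(S_u^{\gamma})^*$ by $C_u$-symmetry. Consequently, for $|\alpha|>1$ the operator $A$ commutes with $(S_u^{\beta})^*$ (Lemma \ref{pro10}(3)), so $A^*$ commutes with $S_u^{\beta}$, $\beta\in\mathbb{D}$. I would verify this directly from the rank-one formula for $S_u^\beta$, checking that $S_u^\alpha$ and $(S_u^{\beta})^*$ generate the same commutant, as Sedlock's identification asserts.

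Next I would apply the $\mathbb{D}$ case. Via $J_\beta$ and Theorem \ref{sar}, together with Proposition \ref{pro2}, we get $A^*=A^u_{\frac{\varphi}{1-\beta\overline u}}$ for some $\varphi\in\mathcal{N}^+_{u_\beta}$. Since $A$ is closed, $A=A^{**}$, and part (3) of Proposition \ref{pro2} gives $A=A^u_{\frac{\overline\varphi}{1-\overline\beta u}}=J_\beta A^{u_\beta}_{\overline\varphi}J_\beta^{-1}$. Now $\frac{1}{1-\overline\beta u}=\frac{\alpha}{\alpha-u}$, so this is exactly $A^u_{\frac{\alpha\overline\varphi}{\alpha-u}}$. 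The domain description follows from $J_\beta^{-1}$ being multiplication by a constant times $\frac{1}{1-\overline\beta u}=\frac{\alpha}{\alpha-u}$; the constant is harmless for membership in a linear domain. The graph formula comes from part (2) of Proposition \ref{pro2}, after rewriting $\frac{1}{1-\overline\beta u}$ as $\frac{\alpha}{\alpha-u}$.

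For the converse, given $\varphi\in\mathcal{N}^+_{u_\beta}$, the operator $A^u_{\frac{\alpha\overline\varphi}{\alpha-u}}$ is the adjoint of $A^u_{\frac{\varphi}{1-\beta\overline u}}$, which is closed, densely defined, and commutes with $S_u^\beta$. Taking adjoints again (Lemma \ref{de}) shows that it commutes with $(S_u^\beta)^*$, and hence with $S_u^\alpha$.

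The main obstacle is the adjoint-commutation step. Lemma \ref{de}(1) only yields inclusions, so I must show both that $(S_u^\beta)^*\mathcal{D}(A^*)\subset\mathcal{D}(A^*)$ and that equality holds there. I would do this directly with the graph characterization of Lemma \ref{de}(2): I expect that $\mathcal{G}(A)$ being invariant under the bounded map $S\oplus S$ implies that $\mathcal{G}(A^*)=(W\mathcal{G}(A))^\perp$ is invariant under $S^*\oplus S^*$. Alongside this, I need to pin down precisely that commuting with $S_u^\alpha$ is equivalent to commuting with $(S_u^{1/\overline\alpha})^*$.
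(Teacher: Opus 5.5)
Your proposal follows the paper's argument. With $\beta=1/\overline{\alpha}$, $A^*$ commutes with $S_u^{\beta}$; the $\mathbb{D}$-case (via $J_\beta$, Theorem \ref{sar} and Proposition \ref{pro2}) gives $A^*=A^u_{\frac{\varphi}{1-\beta\overline{u}}}$; and $A=A^{**}$ together with $\frac{1}{1-\overline{\beta}u}=\frac{\alpha}{\alpha-u}$ yields the symbol, domain and graph. Your observation that $W$ commutes with $T\oplus T$ (so $\mathcal{G}(A^*)=(W\mathcal{G}(A))^\perp$ is $T^*\oplus T^*$-invariant whenever $\mathcal{G}(A)$ is $T\oplus T$-invariant) justifies the adjoint step more cleanly than the inclusions of Lemma \ref{de}(1).

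One correction: $(S_u^\alpha)^*$ is not $S_u^{\beta}$; with the rank-one formula one gets $(S_u^\alpha)^*=(S_u^{\beta})^{-1}$ for $\alpha\neq 1/\overline{u(0)}$. So what you actually use is that, for unbounded $A$, commuting with $S_u^\alpha$ means commuting with $(S_u^{1/\overline{\alpha}})^*$. The paper takes exactly this for granted from Lemma \ref{pro10}(3), so on that point your argument is as complete as the paper's.
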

Let  $|\alpha| = 1$. Then, $S^\alpha_u=U_\alpha$ is unitarily equivalent to the multiplication operator $M_z(f)=z f$ on $L^2(\mu_\alpha)$. In fact, theorem \ref{cl} says that $V_\alpha$ is a unitary operator from $L^2(\mu_\alpha)$ to $K_u$, and we obtain $S^\alpha_u=V_\alpha M_zV_\alpha^{-1}$. The multiplication operator $M_\Phi$ is defined for a measurable function $\Phi$ on $\mathbb{T}$ as 
$$M_\Phi f =\Phi f,\quad f\in \mathcal{D}(M_\Phi)= \lbrace f \in L^2(\mu_\alpha) :\Phi f\in L^2(\mu_\alpha)\rbrace.$$
By Lemma \ref{mult}, $M_\Phi$ is a closed densely defined operator on $L^2(\mu_\alpha)$. We set $\Phi(S^\alpha_u)=V_\alpha M_\Phi V_\alpha^{-1}$ with domain $\{f\in K_u,\  V_\alpha^{-1}f\in \mathcal{D}(M_\Phi)\}$. The following result is likely known, for example see  \cite[Proposition 6.3]{y}.
\begin{Proposition}\label{10} Let  $|\alpha| = 1$.
 $A$ is a closed operator that is densely defined in $K_u$, it commutes with $S^\alpha_u$ if and only if $A =\Phi(S^\alpha_u)$ for some   measurable function  $\Phi:\mathbb{T}\mapsto\mathbb{C}$. 
\end{Proposition}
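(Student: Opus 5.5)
Conjugating by the Clark unitary $V_\alpha$ of Theorem \ref{cl}, I will reduce the statement to a known fact about multiplication operators. Since $S^\alpha_u=V_\alpha M_z V_\alpha^{-1}$ and $V_\alpha$ is unitary, Lemma \ref{sim} shows that $A$ is closed and densely defined in $K_u$ and commutes with $S^\alpha_u$ if and only if $B:=V_\alpha^{-1}AV_\alpha$, with domain $V_\alpha^{-1}\mathcal{D}(A)$, is closed and densely defined in $L^2(\mu_\alpha)$ and commutes with $M_z$. The inclusion $M_z\mathcal{D}(B)\subset\mathcal{D}(B)$ transfers through the conjugation, and so does $M_zB=BM_z$ on $\mathcal{D}(B)$. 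It therefore suffices to prove: a closed densely defined $B$ on $L^2(\mu_\alpha)$ commutes with $M_z$ if and only if $B=M_\Phi$ for some measurable $\Phi$ on $\mathbb{T}$.

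The ``if'' direction is direct. By Lemma \ref{mult}, $M_\Phi$ is closed and densely defined. If $\Phi f\in L^2(\mu_\alpha)$, then $\Phi zf\in L^2(\mu_\alpha)$ because $|z|=1$, and clearly $M_\Phi M_z f=M_zM_\Phi f$. Conjugating back by $V_\alpha$ shows that $\Phi(S^\alpha_u)$ commutes with $S^\alpha_u$.

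The ``only if'' direction is the main obstacle. Let $B$ commute with $M_z$. Then $B$ also commutes with $M_{\bar z}=M_z^{-1}$: for $f\in\mathcal{D}(B)$ we need $\bar z f\in\mathcal{D}(B)$. I would get this from closedness of $B$ together with the cyclic/unitary structure, as follows. By iteration, $B$ commutes with $M_p$ for every analytic polynomial $p$. Next I approximate $\bar z$ on $\mathbb{T}$ by analytic polynomials $p_n(z)$ in the $\mu_\alpha$-a.e.\ boundedly convergent sense; this is available when $\mu_\alpha$ is singular, which holds for Clark measures of an inner function. Then $p_nf\to\bar z f$ and $Bp_nf=p_nBf\to\bar z Bf$, and closedness gives $\bar z f\in\mathcal{D}(B)$ with $B(\bar z f)=\bar z Bf$. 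Hence $B$ commutes with $M_q$ for all trigonometric polynomials $q$. By bounded pointwise approximation and closedness again, $B$ commutes with $M_h$ for every $h\in L^\infty(\mu_\alpha)$.

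Now I build $\Phi$. Choose $g_0\in\mathcal{D}(B)$. Since $\mathcal{D}(B)$ is dense and invariant under all $M_h$, the function $g_0$ can be chosen with $g_0\neq0$ $\mu_\alpha$-a.e. Indeed, taking countably many domain elements whose supports cover $\mathbb{T}$ up to a null set, a weighted sum of them multiplied by unimodular and cutoff $L^\infty$ functions stays in the domain, although whether the sum remains in $\mathcal{D}(B)$ needs closedness plus a convergent-series argument. Set $\Phi=Bg_0/g_0$. For $h\in L^\infty$ we get $B(hg_0)=hBg_0=\Phi hg_0$. Given $f\in\mathcal{D}(B)$, put $h_n=(f/g_0)\chi_{\{|f/g_0|\le n\}}$ and apply the relation to $h_ng_0$. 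Using commutation with the cutoffs, this yields $\chi_nBf=\Phi\chi_n f$, and letting $n\to\infty$ gives $Bf=\Phi f$. So $B\subset M_\Phi$.

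For equality I plan to show $M_\Phi\subset B$: for $f\in\mathcal{D}(M_\Phi)$, the truncations $\chi_{\{|f/g_0|\le n,\,|\Phi|\le n\}}f$ lie in $\mathcal{D}(B)$ and converge in graph norm, so closedness gives $f\in\mathcal{D}(B)$. This yields $B=M_\Phi$, and hence $A=V_\alpha M_\Phi V_\alpha^{-1}=\Phi(S^\alpha_u)$.
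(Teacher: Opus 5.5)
Your proof is correct. It also does more than the paper, which gives no proof of this proposition: it calls the result ``likely known'' and refers to \cite[Proposition 6.3]{y}, so your argument is a self-contained replacement for a citation.

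In effect you show by hand that a closed densely defined operator commuting with the maximal abelian algebra $\{M_h : h\in L^\infty(\mu_\alpha)\}$ is a multiplication operator. You correctly isolate the one place where the Clark structure is needed: upgrading one-sided commutation with $M_z$ to commutation with $M_{\overline{z}}$. That step uses the singularity of $\mu_\alpha$, and without singularity it can genuinely fail.

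Two steps you only sketch are sound but each deserves a line of justification.
\begin{enumerate}
\item The bounded $\mu_\alpha$-a.e.\ approximation of $\overline{z}$ by analytic polynomials follows from the Rudin--Carleson theorem, applied on an increasing sequence of compact Lebesgue-null sets $E_k$ with $\mu_\alpha(\mathbb{T}\setminus E_k)\to 0$, followed by Fej\'er means. Alternatively, you can avoid boundedness: for a fixed $f\in\mathcal{D}(B)$, choose analytic polynomials $p_n\to\overline{z}$ in $L^2(\nu)$ with $\nu=(|f|^2+|Bf|^2)\,d\mu_\alpha$. Such $p_n$ exist because $\nu$ is singular, so analytic polynomials are dense in $L^2(\nu)$ by Szeg\H{o}'s theorem. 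This gives $p_nf\to\overline{z}f$ and $p_nBf\to\overline{z}Bf$ directly.
\item For $g_0$, take $f_k\in\mathcal{D}(B)$ dense in $L^2(\mu_\alpha)$. Disjointify the supports $\{f_k\neq 0\}$ into sets $T_k$; their union has full measure by density. Put $g_0=\sum_k c_k\chi_{T_k}f_k$ with $c_k=2^{-k}(1+\|f_k\|+\|Bf_k\|)^{-1}$. Since $B(\chi_{T_k}f_k)=\chi_{T_k}Bf_k$, the series converges in graph norm. Closedness then gives $g_0\in\mathcal{D}(B)$, and $g_0=c_kf_k\neq0$ on each $T_k$.
\end{enumerate}

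Finally, a bookkeeping point inherited from the paper. Theorem \ref{cl} gives $V_\alpha M_zV_\alpha^{-1}=U_{\beta_\alpha}$, not $U_\alpha$, so strictly you should conjugate by $V_\gamma$ with $\beta_\gamma=\alpha$. Every Clark measure of an inner function is singular, so nothing else in your argument changes.
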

We conclude this section with the following result, obtained via functional calculus.
\begin{theorem} \label{fc}
   $A$ is a closed operator that is densely defined in $K_u$, it commutes with $S^\alpha_u$ it is natural to interpret $A$ via the functional calculus of $S^\alpha_u$, by: 
\begin{enumerate}
\item if $|\alpha| < 1$, then $ A= A^u_{\frac{\Phi}{1-\alpha\overline{u}}}= \Phi(S_u^\alpha)= ((va)(S^\alpha_u))^{-1} b(S^\alpha_u)$, with $\Phi\in \mathcal{N}^+_{u_\alpha}$.
\item If $|\alpha| > 1$, then  $$A=A^u_{\frac{\alpha \overline{\Phi}}{\alpha-u}} =\Phi^*((S_u^{ 1/\overline{\alpha}})^*)= ((va)^*((S_u^{ 1/\overline{\alpha}})^*))^{-1} b^*((S_u^{ 1/\overline{\alpha}})^*),$$ with $\Phi\in\mathcal{N} ^+_{u_{1/\overline{\alpha}}}$, and $\Phi^*(z)=\overline{\Phi(\overline{z}})$. 
\item If $|\alpha|=1$, then $ A=\Phi(S_u^\alpha)$, for some  measurable function $\Phi:\mathbb{T}\mapsto\mathbb{C}$.
\end{enumerate}
\end{theorem}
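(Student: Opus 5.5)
The plan is to treat the three cases of $\alpha$ separately. In each case I reduce to a result already available: Theorem \ref{sar} for $|\alpha|<1$, adjoints for $|\alpha|>1$, and Proposition \ref{10} for $|\alpha|=1$. The real content is identifying the operator $A^u_{\frac{\Phi}{1-\alpha\overline{u}}}$, defined earlier through $J_\alpha$, with the functional-calculus expression $((va)(S^\alpha_u))^{-1}b(S^\alpha_u)$.

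\emph{Case $|\alpha|<1$.} Let $A$ be closed, densely defined and commuting with $S^\alpha_u$. Set $B=J_\alpha^{-1}AJ_\alpha$. By Lemma \ref{sim}, $B$ is closed and densely defined on $K_{u_\alpha}$, and $B$ commutes with $J_\alpha^{-1}S^\alpha_uJ_\alpha=S_{u_\alpha}$. Theorem \ref{sar}, applied to the inner function $u_\alpha$, then gives $B=\Phi(S_{u_\alpha})=A^{u_\alpha}_\Phi$ with $\Phi\in\mathcal{N}^+_{u_\alpha}$. Consequently $A=J_\alpha A^{u_\alpha}_\Phi J_\alpha^{-1}=A^u_{\frac{\Phi}{1-\alpha\overline{u}}}$, by the definition made before Proposition \ref{pro2}. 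The converse direction is part (1) of Proposition \ref{pro2}.

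For the functional calculus identity, first consider bounded symbols. For $\psi\in H^\infty$, Lemma \ref{pro10}(2) gives $\psi(S^\alpha_u)=A^u_{\frac{\psi}{1-\alpha\overline{u}}}=J_\alpha\psi(S_{u_\alpha})J_\alpha^{-1}$, so the $H^\infty$ calculus is transported by $J_\alpha$. Apply this to $\psi=va$ and $\psi=b$. The operator $(va)(S_{u_\alpha})$ is injective with dense range, since $g.c.i.d.(va,u_\alpha)=1$. Hence conjugating the identity $\Phi(S_{u_\alpha})=((va)(S_{u_\alpha}))^{-1}b(S_{u_\alpha})$ by $J_\alpha$ yields $A=((va)(S^\alpha_u))^{-1}b(S^\alpha_u)$. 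Its graph is exactly the description in Proposition \ref{pro2}(1), namely $A^u_{\frac{b}{1-\alpha\overline{u}}}f=A^u_{\frac{va}{1-\alpha\overline{u}}}g$.

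\emph{Case $|\alpha|>1$.} Take adjoints. By Lemma \ref{de}, $A^*$ is closed, densely defined, and commutes with $(S^\alpha_u)^*$. Using Lemma \ref{pro10}(1),(3), $(S^\alpha_u)^*$ is expressed through $S_u^{1/\overline{\alpha}}$, so the previous case (the Proposition stated just before \ref{10}) applies. It gives $A=A^u_{\frac{\alpha\overline{\Phi}}{\alpha-u}}$ with $\Phi\in\mathcal{N}^+_{u_{1/\overline{\alpha}}}$, which is the adjoint of $\Phi(S_u^{1/\overline{\alpha}})$. Since $A$ is closed, $A=A^{**}$.

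To rewrite this as $\Phi^*((S_u^{1/\overline{\alpha}})^*)$, I use the conjugation $\psi\mapsto\psi^*$. For bounded $\psi$ one has $\psi(T)^*=\psi^*(T^*)$. Combining this with $(X^{-1}Y)^*\supseteq Y^*(X^{-1})^*$ from Lemma \ref{de}(1), together with density of the range of $(va)(S^{1/\overline\alpha}_u)$, gives equality of the adjoint. The same reasoning, using Sarason's convention $\Phi^*(S_u^*)$ from Theorem \ref{sar}, yields the claimed formula $((va)^*(T^*))^{-1}b^*(T^*)$ with $T=S_u^{1/\overline{\alpha}}$.

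\emph{Case $|\alpha|=1$.} This is exactly Proposition \ref{10}, with $\Phi(S^\alpha_u)=V_\alpha M_\Phi V_\alpha^{-1}$. This is the Borel functional calculus of the unitary $U_\alpha$, by Theorem \ref{cl} and Lemma \ref{mult}.

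The main obstacle is the adjoint step in the case $|\alpha|>1$. There, $(X^{-1}Y)^*=Y^*(X^*)^{-1}$ must be an equality and not merely an inclusion. I expect to handle it through the graph description: characterize $\mathcal{G}(A)$ by $A^u_{\frac{\alpha\overline{b}}{\alpha-u}}f=A^u_{\frac{\alpha\overline{va}}{\alpha-u}}g$, and observe that this is precisely the graph of $((va)^*(T^*))^{-1}b^*(T^*)$.
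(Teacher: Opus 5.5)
Your overall route is the paper's. You transport by $J_\alpha$ and apply Theorem \ref{sar} for $|\alpha|<1$, pass to adjoints for $|\alpha|>1$, and use Proposition \ref{10} for $|\alpha|=1$. The first and third cases are fine, and your first case is more explicit than the paper's one-line identity $J_\alpha\varphi(S_{u_\alpha})J_\alpha^{-1}=\varphi(S_u^\alpha)$.

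The step you add for $|\alpha|>1$, however, is wrong as stated. Put $\beta=1/\overline{\alpha}$, $T=S_u^\beta$, $X=(va)(T)=A^u_{\frac{va}{1-\beta\overline{u}}}$ and $Y=b(T)=A^u_{\frac{b}{1-\beta\overline{u}}}$.

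\begin{itemize}
\item Lemma \ref{de}(1) gives only $Y^*(X^*)^{-1}\subset (X^{-1}Y)^*$. Density of $\mathrm{Ran}\,X$ does not upgrade this to an equality. The operator $Y^*(X^*)^{-1}$ is defined only on $\mathrm{Ran}\,X^*=A^u_{\frac{\overline{va}}{1-\overline{\beta}u}}K_u$. It is exactly $\widehat{A}^u_{\frac{\overline{\varphi}}{1-\overline{\beta}u}}$, which the paper shows is merely closable, with closure $A$.
\item For instance, if $u_\beta$ divides $b$, then $Y=0$. So $X^{-1}Y$ and its adjoint are the zero operator on all of $K_u$. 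But $Y^*(X^*)^{-1}$ is zero only on the dense subspace $\mathrm{Ran}\,X^*$, which need not be all of $K_u$.
\item The order of the factors is also wrong. The target $((va)^*(T^*))^{-1}b^*(T^*)$ is $(X^*)^{-1}Y^*$, not $Y^*(X^*)^{-1}$. So the equality you say ``must'' hold is false in general and is not the one needed.
\end{itemize}

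The correct identity is $(X^{-1}Y)^*=(X^*)^{-1}Y^*$, and your closing graph argument proves exactly that. Indeed $\mathcal{G}(A)=\{f\oplus g: Y^*f=X^*g\}$, with $Y^*=b(T)^*=b^*(T^*)$ and $X^*=(va)^*(T^*)$ injective, and this is the graph of $(X^*)^{-1}Y^*$. Discard the Lemma \ref{de}/density sentence and let the graph description carry this step. The case then agrees with the paper's (terse) argument.

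There is also a smaller point, which you inherit from the paper by citing the proposition stated just before Proposition \ref{10}. Lemma \ref{pro10}(1),(3) concerns bounded commutants. By itself it does not transfer the unbounded operator $A^*$ from $(S_u^\alpha)^*$ to $S_u^{1/\overline{\alpha}}$. The actual link is $(S_u^\alpha)^*=(S_u^{1/\overline{\alpha}})^{-1}$. Commuting with $T^{-1}$ implies commuting with $T$ only if $T\mathcal{D}(A^*)\subset\mathcal{D}(A^*)$. This is automatic when $A^*$ has nonempty resolvent set, but not for arbitrary closed operators.
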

\begin{proof}$A$ is a closed operator that is densely defined in $K_u$, it commutes with $S^\alpha_u$ if
and only if one of the following cases is true\\
(1)if $|\alpha| < 1$, by Theorem \ref{sar}, a closed operator $A$ densely defined in $K_{u_\alpha}$ commutes with $S_{u_\alpha}$ if and only if  $A_\varphi^{u_\alpha}=\varphi(S_{u_\alpha}) = ((va)(S_{u_\alpha})^{-1} b(S_{u_\alpha})$ where $\varphi\in \mathcal{N}^+_{u_\alpha}$, since $J_\alpha$ is unitary, then 
\begin{eqnarray*}
    A^u_{\frac{\varphi}{1-\alpha\overline{u}}}&=&J_\alpha A^{u_\alpha}_\varphi J^{-1}_\alpha
 =J_\alpha\varphi(S_{u_\alpha})J^{-1}_\alpha\\
 &=&\varphi(J_\alpha S_{u_\alpha}J^{-1}_\alpha)
 =\varphi(S_u^\alpha)
  =((va)(S^\alpha_u))^{-1} b(S^\alpha_u).
\end{eqnarray*}
(2) If $|\alpha| > 1$, then apply to $A^*$ to obtain similar results for $A$.\\
(3) If $|\alpha|=1$, the proof follows immediately from Proposition \ref{10}. 
\end{proof}
\section{Inverse of unbounded  truncated Toeplitz operators}
Before presenting our main results on unbounded operators that admit a bounded inverse, we first review the corresponding results for bounded operators, as studied by Sedlock \cite{sed}.
\begin{theorem} \cite{sed} Let $A$ be an invertible truncated Toeplitz operators. Then $A^{-1}$ is a truncated Toeplitz operators if and only if $A\in\mathcal{B}_{u}^{\alpha}$. As a result, $A^{-1}\in\mathcal{B}_{u}^{\alpha}$.
    
\end{theorem}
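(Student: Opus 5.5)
The plan is to treat the two directions separately, relying on Lemma \ref{pro10} throughout.

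\emph{Sufficiency.} Suppose $A\in\mathcal{B}_u^\alpha$ is invertible. By parts (2)--(4) of Lemma \ref{pro10}, $\mathcal{B}_u^\alpha$ coincides with a commutant:
\begin{itemize}
\item for $|\alpha|\le 1$ it is $\{S_u^\alpha\}'$;
\item for $|\alpha|>1$ it is $\{(S_u^{1/\overline{\alpha}})^*\}'$;
\item for $\alpha=\infty$ it is $\{S_u^*\}'$.
\end{itemize}
In each case $\mathcal{B}_u^\alpha=\{T\}'$ for a single bounded operator $T$. If $AT=TA$, then multiplying on both sides by $A^{-1}$ gives $TA^{-1}=A^{-1}T$. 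Hence $A^{-1}\in\{T\}'=\mathcal{B}_u^\alpha$, and in particular $A^{-1}$ is a truncated Toeplitz operator. This also yields the final assertion $A^{-1}\in\mathcal{B}_u^\alpha$.

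\emph{Necessity.} Suppose $A$ is an invertible truncated Toeplitz operator and $A^{-1}$ is also a truncated Toeplitz operator. The product $AA^{-1}=I$ is then a bounded truncated Toeplitz operator (namely $A^u_1$). Part (5) of Lemma \ref{pro10} applies to the pair $A$, $A^{-1}$ and gives one of two alternatives.
\begin{itemize}
\item Either $A$ or $A^{-1}$ is a scalar multiple of the identity. Then $A=cI$ with $c\neq0$, and $cI=A^u_c$ lies in every $\mathcal{B}_u^\alpha$, since the constant term $c$ appears in the defining formula with $\varphi=0$.
\item Otherwise $A,A^{-1}\in\mathcal{B}_u^\alpha$ for some $\alpha\in\widehat{\mathbb{C}}$.
\end{itemize}
In either case $A\in\mathcal{B}_u^\alpha$ for some $\alpha$, and the sufficiency argument then places $A^{-1}$ in the same class.

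\emph{Main obstacle.} Essentially all the difficulty is hidden in part (5) of Lemma \ref{pro10}, Sedlock's product theorem, which we are allowed to assume. The only point needing care is the degenerate case in which the product equals the identity. Here one must observe that the identity is itself a truncated Toeplitz operator (symbol $1$), so the hypothesis of part (5) is met, and that scalar operators belong to every $\mathcal{B}_u^\alpha$. The second observation is what removes the first alternative of part (5).
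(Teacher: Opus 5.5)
The paper gives no proof of this statement. It is quoted from Sedlock's thesis \cite{sed} and used as a black box, so there is no in-paper argument to compare yours against.

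Your proposal is correct. It is the natural derivation from the facts collected in Lemma \ref{pro10}.
\begin{itemize}
\item For sufficiency, you use that $\mathcal{B}_u^\alpha$ is the commutant of a single bounded operator $T$, and that commutants are inverse-closed: $A^{-1}T=A^{-1}(TA)A^{-1}=A^{-1}(AT)A^{-1}=TA^{-1}$. The paper itself uses exactly this mechanism in the proof of the next theorem, on unbounded operators with bounded inverse. There it shows $A^{-1}S_u^\alpha=S_u^\alpha A^{-1}$ and concludes $A^{-1}\in\mathcal{B}_u^\alpha$.
\item For necessity, you apply part (5) of Lemma \ref{pro10} to the pair $A$, $A^{-1}$, whose product is $I=A^u_1$. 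You correctly dispose of the scalar alternative by noting that $A$ is then $cI=A^u_c$ with $c\neq 0$, and $cI$ lies in every $\mathcal{B}_u^\alpha$.
\end{itemize}

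One small point of bookkeeping: $\alpha=\infty$ is not literally covered by parts (2)--(4) of Lemma \ref{pro10}. The identity $\mathcal{B}_u^\infty=\{S_u^*\}'$ that you use follows from part (1) combined with part (2) at $\alpha=0$. Alternatively, you can handle that case by passing to adjoints, using $(A^{-1})^*=(A^*)^{-1}$.
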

We now state the following result concerning unbounded truncated Toeplitz operators.
\begin{theorem}Let $A$ is closed operator  densely defined in $K_u$ commutes with $S^\alpha_u$.    The operator $A$ has a bounded inverse if and only if one of the following conditions holds: 
\begin{enumerate}
\item  If $|\alpha| < 1$, then $ A= A^u_{\frac{\varphi}{1-\alpha\overline{u}}}$,  $\varphi\in \mathcal{N}^+_{u_\alpha}$,  $A^{- 1 }=A^u_{\frac{va\psi}{1-\alpha\overline{u}}}$,   $\psi\in H^\infty$, and  $u_\alpha$ divides $\psi b-1.$
\item If $|\alpha| > 1$, then $A=A^u_{\frac{\alpha \overline{\phi}}{\alpha-u}}$,  $\varphi\in \mathcal{N} ^+_{u_{1/\overline{\alpha}}}$,  $A^{- 1 }=A^u_{\frac{\alpha \overline{va\psi}}{\alpha-u}}$,  $\psi\in H^\infty$, and  $u_\alpha$ divides $\psi b-1.$
\item If $|\alpha|=1$, then $ A=\Phi(S_u^\alpha)$ for a measurable function $\Phi:\mathbb{T}\mapsto\mathbb{C}$,   $0 \notin
\Phi_{ess}(\mathbb{T})$, and 
$A^{- 1 }=\Psi(S_u^\alpha)$  for $\Psi : \mathbb{T} \to \mathbb{C}$ defined by $\Psi(z) :=1/\Phi(z)$ if $\Phi(z) \neq0$,
$0$ if $\Phi(z) = 0$. 
\end{enumerate}	

\end{theorem}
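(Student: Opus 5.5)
The plan is to reduce each case, via Theorem \ref{fc}, to a model situation where invertibility can be read off: Sarason's operators on $K_{u_\alpha}$ for $|\alpha|<1$, adjoints for $|\alpha|>1$, and multiplication operators on $L^2(\mu_\alpha)$ for $|\alpha|=1$. Throughout we use that a unitary similarity preserves closedness, domains and the existence of a bounded inverse (Lemma \ref{sim}). We also use that if $A$ is closed and has a bounded inverse commuting with $A$, then $A^{-1}$ commutes with $S_u^\alpha$ whenever $A$ does: from $S_u^\alpha A=AS_u^\alpha$ on $\mathcal{D}(A)$, apply $A^{-1}$ on both sides. Hence $A^{-1}$ is a bounded operator in the commutant $\{S_u^\alpha\}'$, which Lemma \ref{pro10} identifies explicitly.

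\emph{Case $|\alpha|<1$.} By Theorem \ref{fc}, $A=A^u_{\frac{\varphi}{1-\alpha\overline{u}}}=J_\alpha A^{u_\alpha}_\varphi J_\alpha^{-1}$ with $\varphi=b/(va)\in\mathcal{N}^+_{u_\alpha}$. If $A$ has a bounded inverse, then $A^{-1}\in\{S_u^\alpha\}'=\mathcal{B}_u^\alpha$, so by part (2) of Lemma \ref{pro10} $A^{-1}=A^u_{\frac{\chi}{1-\alpha\overline{u}}}$ with $\chi\in H^\infty$. The task is to show that $\chi$ can be written as $va\psi$ with $\psi\in H^\infty$, and that $u_\alpha\mid \psi b-1$.

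\begin{enumerate}
\item Transport to $K_{u_\alpha}$. The statement becomes: $B=A^{u_\alpha}_\varphi$ has bounded inverse $A^{u_\alpha}_\chi$.
\item Use the graph description in Lemma \ref{sarason}(1). For every $h\in K_{u_\alpha}$, the pair $A^{u_\alpha}_\chi h\oplus h$ lies in $\mathcal{G}(B)$, which gives
\[
A^{u_\alpha}_{b}A^{u_\alpha}_\chi h=A^{u_\alpha}_{va}h .
\]
Since these are analytic truncated Toeplitz operators, this reads $A^{u_\alpha}_{b\chi-va}=0$. Hence $u_\alpha \mid b\chi-va$.
\item Show that $va$ divides $\chi$ modulo $u_\alpha$. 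This is the main obstacle. I would try to obtain it from the range condition $A^{u_\alpha}_\chi K_{u_\alpha}=\mathcal{D}(B)$ together with Lemma \ref{sarason}(2), which describes $\mathcal{D}(B)$ as containing $A^{u_\alpha}_{va}K_{u_\alpha}$. An alternative is to argue directly with the identity $b\chi\equiv va \pmod{u_\alpha}$ and the gcd conditions: $\gcd(v,b)=1$ and $\gcd(v,u_\alpha)=1$ should force $v$ to divide $\chi$ up to a multiple of $u_\alpha$. For the outer factor $a$, the natural candidate is $\chi=va\psi$, with $\psi$ chosen so that $b\psi\equiv 1$; this requires $b$ to be invertible modulo $u_\alpha$, meaning $A^{u_\alpha}_b$ is invertible. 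Using Lemma \ref{sarason}(6) to modify the symbol modulo $u_\alpha$, we may take $\chi=va\psi$, and then step 2 yields $u_\alpha\mid va(\psi b-1)$. Coprimality of $va$ with $u_\alpha$ then gives $u_\alpha\mid\psi b-1$.
\item Converse. Given $\psi\in H^\infty$ with $u_\alpha\mid \psi b-1$, set $T=A^u_{\frac{va\psi}{1-\alpha\overline{u}}}$. The graph relation shows that $T$ maps into $\mathcal{D}(A)$ with $AT=I$, and Proposition \ref{pro} gives $TA=I$ on $\mathcal{D}(A)$.
\end{enumerate}

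\emph{Case $|\alpha|>1$.} Pass to adjoints. $A$ has a bounded inverse if and only if $A^*$ does, with $(A^*)^{-1}=(A^{-1})^*$. Moreover $A^*$ commutes with $S_u^{1/\overline{\alpha}}$, where $|1/\overline{\alpha}|<1$. Applying the first case to $A^*$ and taking adjoints, using Proposition \ref{pro2}(3) and Lemma \ref{pro10}(1),(3), produces the stated form.

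\emph{Case $|\alpha|=1$.} By Proposition \ref{10}, $A=V_\alpha M_\Phi V_\alpha^{-1}$. Since $V_\alpha$ is unitary, $A$ has a bounded inverse if and only if $M_\Phi$ does. By Lemma \ref{mult}(7) this holds exactly when $0\notin\Phi_{ess}(\mathbb{T})$, with inverse $M_\Psi$ where $\Psi=1/\Phi$ off the zero set. Conjugating back gives $A^{-1}=\Psi(S_u^\alpha)$.
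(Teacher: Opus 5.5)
Your reduction, steps 1, 2 and 4, and the case $|\alpha|=1$ are fine. Step 3, however, is a genuine gap, and as you have set it up it cannot be closed. Your first route needs $b$ to be invertible modulo $u_\alpha$ (given step 2, this is equivalent to $va$ dividing $\chi$ modulo $u_\alpha$). Your gcd route needs $v$ to divide $\chi$ modulo $u_\alpha$. Neither follows from $b\chi\equiv va \pmod{u_\alpha}$ and the coprimality conditions, and for the symbol $\varphi$ handed to you by Theorem \ref{fc} both can fail.

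Here is a counterexample. Take $\alpha=0$ and let $u$ be a Blaschke product with zeros $z_n$. Let $\theta$ be a Blaschke product with zeros $w_n\notin\{z_k\}$ and $\rho(z_n,w_n)=|z_n-w_n|/|1-\overline{w_n}z_n|\to 0$. Then $\gcd(\theta,u)=1$ and $|\theta(z_n)|\le\rho(z_n,w_n)\to 0$. Put $\varphi=(\theta+u)/\theta\in\mathcal{N}^+_u$. Its data are $v=\theta$ and $b=a(\theta+u)$, with $a$ outer, $|a|^2=(1+|\theta+u|^2)^{-1}$ on $\mathbb{T}$, and $\gcd(v,b)=1$. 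Since $A^u_{au}=0$, Lemma \ref{sarason}(1) gives $f\oplus g\in\mathcal{G}(A^u_\varphi)$ iff $A^u_{a\theta}(f-g)=0$ iff $f=g$. So $A^u_\varphi=I$, which has a bounded inverse, with $\chi=1$. Yet $u\mid\psi b-1$ would force $\psi(z_n)a(z_n)\theta(z_n)=1$ for all $n$, which is impossible for $\psi\in H^\infty$. Likewise $v=\theta$ does not divide $\chi=1$ modulo $u$.

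So step 3 is not provable for an arbitrary representing symbol. The paper's proof has the same hole: it simply writes ``if there exists $\psi\in H^\infty$ such that $\Psi=\psi va$'' without justification.

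The missing idea is that the symbol of $A$ is only determined modulo $u_\alpha$ (Lemma \ref{sarason}(6)). You must \emph{choose} it, and the right choice is $1/\chi$.
Since $B^{-1}=A^{u_\alpha}_\chi$ is injective, the inner factor $I_\chi$ of $\chi=I_\chi O_\chi$ is coprime to $u_\alpha$, so $1/\chi\in\mathcal{N}^+_{u_\alpha}$. Its data are $v=I_\chi$, $b=\omega$ and $a=\omega O_\chi$, where $\omega$ is outer with $|\omega|^2=(1+|O_\chi|^2)^{-1}$ on $\mathbb{T}$. Hence $1/\omega\in H^\infty$ and $va=\omega\chi$.
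Then $f\oplus g\in\mathcal{G}(A^{u_\alpha}_{1/\chi})$ iff $A^{u_\alpha}_\omega f=A^{u_\alpha}_{\omega\chi}g$ iff $f=A^{u_\alpha}_\chi g$, which describes exactly $\mathcal{G}(B)$. Thus $B=A^{u_\alpha}_{1/\chi}$, and $\psi:=1/\omega$ gives $va\psi=\chi$ and $\psi b-1=0$.
This proves the forward direction when the statement is read as ``$A$ admits such a representation''; the case $|\alpha|>1$ then follows by adjoints, as you say. Read as a claim about every representing symbol, the statement is false by the example above.
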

\begin{proof}
Let $f$ be a function in the domain $\mathcal{D}(A)$. Since $A$  commutes with $S^\alpha_u$, we have $S_u^\alpha(\mathcal{D}(A))\subseteq \mathcal{D}(A)$. Moreover, because $A^{-1}f \in \mathcal{D}(A)$, it follows that $S_u^\alpha A^{-1}f\in\mathcal{D}(A)$. Therefore,  $$S_u^\alpha f = S_u^\alpha A A^{- 1 } f=A S_u^\alpha A^{- 1 } f,$$ and thus $$A^{- 1 }S_u^\alpha f = A^{- 1 }A S_u^\alpha A^{- 1 } f=S_u^\alpha A^{- 1 } f.$$ so $$A^{- 1 }S_u^\alpha=S_u^\alpha A^{- 1 }\textit{ on } \mathcal{D}(A).$$ since $\mathcal{D}(A)$ is dense in $K_u$, $$A^{- 1 }S_u^\alpha=S_u^\alpha A^{- 1 } \textit{ on } K_u.$$ Then $A^{- 1 } $ is a bounded operator on $K_u$ which commutes with $S_u^\alpha$, hence $A^{- 1 } \in \mathcal{B}_u^\alpha$, and therefore by Theorem \ref{fc} and Lemma \ref{pro10}, there are three cases:\\
(1) If $|\alpha| < 1$, then $ A= A^u_{\frac{\varphi}{1-\alpha\overline{u}}}$, with $\varphi\in \mathcal{N}^+_{u_\alpha}$, and $A^{- 1 }=A^u_{\frac{\Psi}{1-\alpha\overline{u}}}$, with $\Psi\in H^\infty$. For $f$ in
    $\mathcal{D}(A^u_{\frac{\varphi}{1-\alpha\overline{u}}})$,  proposition \ref{pro} gives  $A^{- 1 }Af=A^u_{\frac{\Psi\varphi}{1-\alpha\overline{u}}}f=f$  if and only if $u_\alpha$ divides $\Psi\varphi-1$. Equivalently, if there exists $\psi\in H^\infty$ such that $\Psi=\psi va$, then  $u_\alpha$ divides $\psi va\varphi-1=\psi b-1.$ Let now, $f\in K_u$, we aim to show that $A^u_{\frac{va\psi}{1-\alpha\overline{u}}}f\in\mathcal{D}(A^u_{\frac{\varphi}{1-\alpha\overline{u}}})=\mathcal{D}((\widehat{A}^u_{\frac{\overline{\varphi}}{1-\overline{\alpha}u}})^{*})$. Suppose $g=A^u_{\frac{\overline{va}}{1-\overline{\alpha}u}}h\in\mathcal{D}(\widehat{A}^u_{\frac{\overline{\varphi}}{1-\overline{\alpha}u}})$, where $h$ in $K_{u}$, then 
$$
\begin{aligned}
\left\langle \widehat{A}^u_{\frac{\overline{\varphi}}{1-\overline{\alpha}u}}g,A^u_{\frac{va\psi}{1-\alpha\overline{u}}}f \right\rangle & =\left\langle A^u_{\frac{\overline{b}}{1-\overline{\alpha}u}}h,A^u_{\frac{va\psi}{1-\alpha\overline{u}}}f\right\rangle=\left\langle A^u_{\frac{\overline{va}}{1-\overline{\alpha}u}}h,A^u_{\frac{b\psi}{1-\alpha\overline{u}}}f\right\rangle\\
& =\left\langle g,A^u_{\frac{b\psi}{1-\alpha\overline{u}}}f\right\rangle,
\end{aligned}
$$
therefore $A^u_{\frac{va\psi}{1-\alpha\overline{u}}}f\in\mathcal{D}(A^u_{\frac{\varphi}{1-\alpha\overline{u}}}).$ Since $A^u_{\frac{\varphi}{1-\alpha\overline{u}}} $ closure of $\widehat{A}^u_{\frac{\varphi}{1-\alpha\overline{u}}}$, there exists $A^u_{\frac{va\psi}{1-\alpha\overline{u}}}f_n\in \mathcal{D}(\widehat{A}^u_{\frac{\varphi}{1-\alpha\overline{u}}}) $, converge to $A^u_{\frac{va\psi}{1-\alpha\overline{u}}}f$  such as $f_n\in  \mathcal{D}(\widehat{A}^u_{\frac{\varphi}{1-\alpha\overline{u}}}) $ and $\widehat{A}^u_{\frac{\varphi}{1-\alpha\overline{u}}}f_n=A^u_{\frac{b}{1-\alpha\overline{u}}}h_n$ where $f_n=A^u_{\frac{va}{1-\alpha\overline{u}}}h_n$ and $h_n\in K_u$, it follows that,   
\begin{eqnarray*}
  A^u_{\frac{\varphi}{1-\alpha\overline{u}}}A^u_{\frac{va\psi}{1-\alpha\overline{u}}}f &=&\lim_{n\to +\infty}\widehat{A}^u_{\frac{\varphi}{1-\alpha\overline{u}}}A^u_{\frac{va\psi}{1-\alpha\overline{u}}} f_n=\lim_{n\to +\infty}\widehat{A}^u_{\frac{\varphi}{1-\alpha\overline{u}}}A^u_{\frac{va\psi}{1-\alpha\overline{u}}} A^u_{\frac{va}{1-\alpha\overline{u}}}h_n\\&=&\lim_{n\to +\infty}\widehat{A}^u_{\frac{\varphi}{1-\alpha\overline{u}}}A^u_{\frac{va}{1-\alpha\overline{u}}} A^u_{\frac{\psi va}{1-\alpha\overline{u}}}h_n=\lim_{n\to +\infty}A^u_{\frac{b}{1-\alpha\overline{u}}}A^u_{\frac{\psi va}{1-\alpha\overline{u}}}h_n\\  
 &=&\lim_{n\to +\infty}A^u_{\frac{b\psi}{1-\alpha\overline{u}}} f_n=A^u_{\frac{b\psi}{1-\alpha\overline{u}}} f=A^u_{\frac{1}{1-\alpha\overline{u}}} f=f.
\end{eqnarray*}
(2)  For the case $|\alpha| > 1$, the corresponding results for $A$ follow by applying the preceding argument to its adjoint $A^*$.\\
(3) If $|\alpha|=1$, then $ A=\Phi(S_u^\alpha)$ for some  measurable function $\Phi:\mathbb{T}\mapsto\mathbb{C}$ has a bounded inverse if and only if $0 \notin
\Phi_{ess}(\mathbb{T})$. In that case, one has 
$A^{- 1 }=\Psi(S_u^\alpha)$  for $\Psi : \mathbb{T} \to \mathbb{C}$ defined by $\Psi(z) :=1/\Phi(z)$ if $\Phi(z) \neq0$,
$0$ if $\Phi(z) = 0$. 
\end{proof}
\section{Unbounded self-adjoint truncated Toeplitz operators}
If a truncated Toeplitz operator of type $ \alpha $ is self-adjoint, then necessarily $ |\alpha| = 1 $. In this case, the operator is of the form $\Phi(U_\alpha)$ for some $\Phi\in L^\infty(\mathbb{T},\mu_\alpha)$ that is real-valued $\mu_\alpha$-almost everywhere. For the unbounded case, we have the following result.
\begin{theorem}
$A$ is a closed operator that is densely defined in $K_u$, it commutes with $S^\alpha_u$ is self-adjoint if and only if $ A=\Phi(S_u^\alpha)$ for some $\alpha\in\mathbb{T}$ and a real-valued measurable function  $\Phi$ on $ \mathbb{T}$.
\end{theorem}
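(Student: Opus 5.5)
The plan is to use the classification of Theorem \ref{fc} and to treat the three ranges of $|\alpha|$ separately. The case $|\alpha|=1$ gives both directions through unitary equivalence with multiplication operators. The cases $|\alpha|\neq 1$ must be ruled out.

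\emph{Sufficiency.} Suppose $\alpha\in\mathbb{T}$ and $\Phi$ is real-valued and measurable. By definition $\Phi(S_u^\alpha)=V_\alpha M_\Phi V_\alpha^{-1}$, where $V_\alpha$ is the unitary of Theorem \ref{cl}. Part (4) of Lemma \ref{mult} shows that $M_\Phi$ is self-adjoint on $L^2(\mu_\alpha)$. Lemma \ref{sim} then transfers self-adjointness to $\Phi(S_u^\alpha)$; concretely, $(V_\alpha M_\Phi V_\alpha^{-1})^*=V_\alpha M_\Phi^* V_\alpha^{-1}$ with matching domains. Proposition \ref{10} already shows that this operator is closed, densely defined and commutes with $S_u^\alpha$.

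\emph{Necessity when $|\alpha|=1$.} By Proposition \ref{10}, $A=\Phi(S_u^\alpha)$ for some measurable $\Phi$. Self-adjointness of $A$ is equivalent, via Lemma \ref{sim}, to self-adjointness of $M_\Phi$. By Lemma \ref{mult}(4), $M_\Phi^*=M_{\overline\Phi}$, so $M_{\overline\Phi}=M_\Phi$. Lemma \ref{mult}(3) then gives $\Phi=\overline\Phi$ $\mu_\alpha$-a.e. We may therefore redefine $\Phi$ on a $\mu_\alpha$-null set to make it real-valued everywhere, and this does not change the operator.

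\emph{Necessity when $|\alpha|\neq1$ (main obstacle).} We must show that if $A$ is self-adjoint and commutes with $S_u^\alpha$ for some $|\alpha|\ne1$, then $A$ is also of the form $\Phi(S_u^\beta)$ with $\beta\in\mathbb{T}$ and $\Phi$ real. I would pass to a bounded operator by taking the resolvent. Since $A$ is self-adjoint, $R=(A-iI)^{-1}$ is bounded, and $R\,\mathcal{D}(A)\subset\mathcal{D}(A)$. The computation at the start of the proof of the inverse theorem in Section 5 shows that $R$ commutes with $S_u^\alpha$, so $R\in\mathcal{B}_u^\alpha$. The same argument applied to $A+iI$ gives $R^*=(A+iI)^{-1}\in\mathcal{B}_u^\alpha$. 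By Lemma \ref{pro10}(1), $R^*\in\mathcal{B}_u^{1/\overline\alpha}$ as well. Since $1/\overline\alpha\neq\alpha$, $R^*$ lies in two different Sedlock classes. I expect this to force $R^*$ to be a scalar plus an element of $\mathcal{B}_u^\alpha\cap\mathcal{B}_u^{1/\overline\alpha}$. Sedlock's results show that this intersection is trivial, i.e.\ consists only of scalars, unless a degenerate situation occurs (e.g.\ $\dim K_u$ small). If $R$ is scalar, then $A$ is a multiple of the identity on its domain, hence bounded and real-scalar. A real scalar $c$ is trivially $\Phi(S_u^\beta)$ with $\Phi\equiv c$ for any $\beta\in\mathbb{T}$, so the conclusion holds.

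The delicate point is justifying $\mathcal{B}_u^\alpha\cap\mathcal{B}_u^{\beta}=\mathbb{C}I$ for $\alpha\neq\beta$. I would first try to derive it from Lemma \ref{pro10}(5) together with the uniqueness of symbols modulo $uH^2+\overline{uH^2}$. If that fails, I would use the direct description: an operator in both classes commutes with both $S_u^\alpha$ and $S_u^\beta$. Hence it commutes with their difference, which is a nonzero rank-one operator $c\,k_0^u\otimes\widetilde{k_0^u}$. This makes $k_0^u$ an eigenvector of the operator, and cyclicity of $k_0^u$ for $S_u^\alpha$ then forces the operator to be scalar.
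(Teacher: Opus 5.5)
Your proposal is correct, and it follows a different and more complete route than the paper. The paper's proof is the single sentence ``follows from Lemma \ref{mult} and Theorem \ref{fc}''. It really handles only the unimodular case $A=\Phi(S_u^\alpha)=V_\alpha M_\Phi V_\alpha^{-1}$, via $M_\Phi^*=M_{\overline\Phi}$. For $|\alpha|\neq1$ it tacitly relies on the unproved remark before the theorem, stated only for \emph{bounded} operators, that self-adjointness forces $|\alpha|=1$. Nothing in Theorem \ref{fc} explains why an operator $A^u_{\Phi/(1-\alpha\overline{u})}$ with $\Phi\in\mathcal{N}^+_{u_\alpha}$ cannot be self-adjoint.

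Your resolvent argument fills exactly this gap. The operators $(A\mp iI)^{-1}$ are bounded and commute with $S_u^\alpha$, so $R,R^*\in\mathcal{B}_u^\alpha$. This reduces the unbounded problem to Sedlock's bounded theory and shows that $A$ is a real scalar. You also correctly note that the conclusion must allow a unimodular parameter different from the original $\alpha$, since real scalars commute with every $S_u^\alpha$; the paper's formulation blurs this.

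One small repair is needed. Your fallback proof of $\mathcal{B}_u^\alpha\cap\mathcal{B}_u^{1/\overline\alpha}=\mathbb{C}I$, via ``$S_u^\alpha-S_u^\beta$ is rank one'', breaks down in two places:
\begin{itemize}
\item for $\alpha=0$, the partner class is $\mathcal{B}_u^\infty=\{S_u^*\}'$, and $S_u-S_u^*$ is not rank one;
\item at $\alpha=1/\overline{u(0)}$, the defining formula for $S_u^\alpha$ is meaningless.
\end{itemize}
A uniform argument avoids both. Put $\gamma=\alpha$ if $|\alpha|<1$ and $\gamma=1/\overline\alpha$ if $|\alpha|>1$. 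When $|\alpha|>1$, use $\{S_u^\alpha\}'=\mathcal{B}_u^\alpha=\{(S_u^{\gamma})^*\}'$ from Lemma \ref{pro10}(3). In either case, the fact that both $R$ and $R^*$ lie in this commutant shows that $R$ commutes with $T=S_u^\gamma$ and with $T^*$. Then $J_\gamma^{-1}RJ_\gamma$ commutes with $S_{u_\gamma}$ and with the rank-one defect $I-S_{u_\gamma}S_{u_\gamma}^*=k_0^{u_\gamma}\otimes k_0^{u_\gamma}$. Hence $k_0^{u_\gamma}$ is an eigenvector, and since $k_0^{u_\gamma}$ is cyclic for $S_{u_\gamma}$, $R$ is a scalar.
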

\begin{proof}
The proof follows from Proposition \ref{mult} and Theorem \ref{fc}.
\end{proof}

\subsection*{Eigenvalues and eigenspaces}
We assume that $\alpha$ is in $\mathbb{T}$ in this part. So, a closed operator $A$ that is densely defined in $K_u$, commutes with $S^\alpha_u$, and is self-adjoint can be expressed as  $ A=\Phi(S_u^\alpha)=V_\alpha M_\Phi V_\alpha^{-1}$ 
 for some real-valued measurable function  $\Phi$ on $ \mathbb{T}$. Since $V_\alpha$  is unitary, the eigenvalues of $ \Phi(S_u^\alpha) $ coincide with those of $M_\Phi$, and the corresponding eigenspaces are related via $V_\alpha$.
\begin{Proposition}
Let $A$ is closed, operator  densely defined operator in $K_u$ commutes with $S^\alpha_u$ and self-adjoint. Then 
\begin{enumerate}
\item The operator $ \Phi(S_u^\alpha) $ has a bounded inverse if and only if $0\notin \Phi_{ess}(\mathbb{T})$. In that case, one has $ \Phi^{-1}(S_u^\alpha)= \Psi(S_u^\alpha) $
for $\psi : \mathbb{T} \to \mathbb{R}$ defined by $\psi(s)=\frac{1}{\Phi(s)}$ if $ \Phi(s)\neq 0 $ and $0$ otherwise.
\item The eigenvalues of $ \Phi(S_u^\alpha) $ are the $\lambda\in\mathbb{R}$  such that $\mu_\alpha( \lbrace \Phi^{-1}(\lambda)\rbrace)>0.$
 The  eigenspace is $\lbrace V_\alpha f, f\in L^2(\mathbb{T},\mu_\alpha), f/_{\lbrace \Phi\neq \lambda \rbrace}=0\rbrace.$
\end{enumerate}

\end{Proposition}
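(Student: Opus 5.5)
The plan is to transport everything to $L^2(\mu_\alpha)$ through the unitary $V_\alpha$ of Theorem \ref{cl}, and then read off both statements from Lemma \ref{mult}. By the preceding theorem on self-adjointness, $A=\Phi(S_u^\alpha)=V_\alpha M_\Phi V_\alpha^{-1}$ with $\alpha\in\mathbb{T}$, $\Phi$ real-valued and measurable, and $\mathcal{D}(A)=V_\alpha\mathcal{D}(M_\Phi)$. By Lemma \ref{sim}, every property of $M_\Phi$ that is preserved under unitary equivalence (closedness, bounded invertibility, eigenvalues) transfers to $A$. The essential range is taken with respect to $\mu_\alpha$, and I will state this explicitly.

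For (1), I first show that bounded invertibility transfers in both directions.
\begin{itemize}
\item If $B$ is a bounded inverse of $M_\Phi$ in the sense of item (6) of the Definition, then $V_\alpha BV_\alpha^{-1}$ is bounded.
\item It satisfies $AV_\alpha BV_\alpha^{-1}=V_\alpha M_\Phi BV_\alpha^{-1}=I$ on $K_u$.
\item It satisfies $V_\alpha BV_\alpha^{-1}A=I$ on $\mathcal{D}(A)$.
\item It maps $\mathcal{D}(A)=V_\alpha\mathcal{D}(M_\Phi)$ into itself.
\item The converse follows by conjugating with $V_\alpha^{-1}$.
\end{itemize}
Hence $A$ has a bounded inverse if and only if $M_\Phi$ does. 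By Lemma \ref{mult}(7), this happens exactly when $0\notin\Phi_{ess}(\mathbb{T})$, and then $M_\Phi^{-1}=M_\psi$ with $\psi=1/\Phi$ off $\{\Phi=0\}$ and $\psi=0$ on it. Since $\Phi$ is real, $\psi$ is real. Therefore $A^{-1}=V_\alpha M_\psi V_\alpha^{-1}=\psi(S_u^\alpha)$, which is the claimed $\Psi(S_u^\alpha)$. Alternatively, this is just case (3) of the inversion theorem of Section 5, specialized to real $\Phi$.

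For (2), observe that $Af=\lambda f$ with $0\neq f\in\mathcal{D}(A)$ holds if and only if $g=V_\alpha^{-1}f\neq0$, $g\in\mathcal{D}(M_\Phi)$ and $\Phi g=\lambda g$ $\mu_\alpha$-a.e. So the eigenvalues of $A$ and $M_\Phi$ coincide, and $\ker(A-\lambda)=V_\alpha\ker(M_\Phi-\lambda)$. By Lemma \ref{mult}(6), $\lambda$ is an eigenvalue if and only if $\mu_\alpha(\Phi^{-1}(\lambda))>0$, and then the eigenspace is $L^2(\{\Phi=\lambda\},\mu_\alpha)$. That is, it consists of the $g\in L^2(\mu_\alpha)$ vanishing a.e. on $\{\Phi\neq\lambda\}$; note that such $g$ automatically lie in $\mathcal{D}(M_\Phi)$, because $\Phi g=\lambda g\in L^2$. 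Since self-adjoint operators have real spectrum (and $\Phi$ is real, so $\Phi^{-1}(\lambda)=\emptyset$ for nonreal $\lambda$), we may restrict to $\lambda\in\mathbb{R}$. Applying $V_\alpha$ gives the stated eigenspace.

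I expect no serious obstacle. The only point needing care is the domain bookkeeping in (1): the definition of bounded inverse used in the paper includes the invariance $A^{-1}(\mathcal{D}(A))\subset\mathcal{D}(A)$. For $M_\psi$ this invariance holds because $\psi$ is bounded, so $\psi g\in L^2$, and $\Phi\psi g=\mathbf{1}_{\{\Phi\neq0\}}g\in L^2$ whenever $g\in L^2$.
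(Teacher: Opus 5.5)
Your proposal is correct and follows the paper's route. Like the paper, you write $\Phi(S_u^\alpha)=V_\alpha M_\Phi V_\alpha^{-1}$ via the unitary $V_\alpha$ and read off both bounded invertibility and the eigenvalues/eigenspaces from items (7) and (6) of Lemma \ref{mult}. The only difference is that you spell out, with the domain-invariance clause, how the bounded inverse and the eigenvectors transfer under $V_\alpha$, which the paper treats as immediate.
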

\begin{proof}Let $A$  is  closed, densely defined operator in $K_u$ commutes with $S^\alpha_u$ and self-adjoint. \\
(1) The proof follows immediately from propositions \ref{10} and Lemma \ref{mult}. \\
(2)     By Lemma \ref{mult}, a real number $ \lambda $ is an eigenvalue of $ \Phi(S_u^\alpha) $ if and only if $ \lambda $ is an eigenvalue of $M_\Phi$ if and only if $ \mu_\alpha( \lbrace \Phi^{-1}(\lambda)\rbrace)>0$. The associated eigenspace is  $$f\in L^2(\lbrace\Phi= \lambda\rbrace,\mu_\alpha)=\lbrace f\in L^2(\mathbb{T},\mu_\alpha), f/_{\lbrace \Phi\neq \lambda \rbrace}=0\rbrace.$$ We have  $M_\Phi= V_\alpha^{-1}\Phi(S_u^\alpha)V_\alpha  $ , then  $f\in L^2(\lbrace\Phi= \lambda\rbrace,\mu_\alpha),$ is eigenspace of $M_\Phi$ if and only if $V_\alpha f$  is eigenspace of $ \Phi(S_u^\alpha) $.

\end{proof}

\section*{Acknowledgments} Author $^2$. I would like to dedicate this work to the memory of my supervisor,  Mohamed Zarrabi. It was as his student at IMB Bordeaux that he first introduced me to truncated Toeplitz operators—a moment that set the course of my entire mathematical career.\\
Authors $^{1,2,3}$. The General Direction of Scientific Research and Technological Development (DGRSDT) of Algeria is funding this research.


\begin{thebibliography}{9}
\thispagestyle{empty}
 \bibitem {B} D. Borthwick, Spectral theory: Basic concepts and applications, Graduate Texts in
Mathematics, vol. 284, Springer, Cham, 2020.
\bibitem {cl} D.N. Clark, One-dimensional perturbations of restricted shifts, J. Anal. Math. 25, 169–191, 1972.
\bibitem {E} K.J. Engel, and R. Nagel. One-parameter semigroups for linear evolution equations. Graduate
Texts in Mathematics, Vol 194. Springer-Verlag, New York, 2000.
\bibitem {ni2} N. Nikolski, Treatise on the shift operator. Spectral function theory,  Grundlehren derMathematischen Wissenschaften, vol. 273, Springer-Verlag, Berlin, 1986.
\bibitem {ps} A. Poltoratski, D. Sarason, Aleksandrov-Clark measures, recent advances in operator-related function theory, Contemp. Math., vol. 393, Amer. Math. Soc., Providence, RI, pp. 1–14, 2006. 
\bibitem {s} E. Saksman, An elementary introduction to Clark measures, Topics in complex
analysis and operator theory, Univ. M\'alaga, pp. 85–136, 2007.
\bibitem{sar}
D. Sarason, Algebraic properties of truncated Toeplitz operators. Oper. Matrices Vol.1 No.4, 491-526, 2007.
\bibitem{sar2}  D. Sarason, Generalized interpolation in $H^\infty$, Trans. Amer. Math. Soc 127, 179-203, 1967.
\bibitem{sarb}
 D. Sarason, Sub-Hardy Hilbert spaces in the unit disk, John Wiley, Sons Inc., New York, 1994.
 \bibitem{sar4} D. Sarason, Unbounded Toeplitz operators, Integral Equations Oper. Theory, 61, no. 2, 281-298. 2008. 
\bibitem{sar3} D. Sarason. Unbounded operators commuting with restricted backwards shifts. Oper. Matrices, Vol.2 No.4, 583-601, 2008.
\bibitem{sed2} N. A. Sedlock, Algebras of truncated Toeplitz operators , Oper. Matrices, Vol.5 No.2, 309-326, 2011.
\bibitem{sed} N. A. Sedlock, Properties of Truncated Toeplitz Operators, ProQuest LLC, Ann Arbor, MI, Ph.D. thesis, Washington University in St. Louis, 2010.
\bibitem {k} K. Schmudgen, Unbounded self-adjoint operators on Hilbert space. Graduate Texts in Math. 265, Springer, Cham, 2012.
\bibitem{su} D. Su\'arez. Closed commutants of the backwards shift operator. Pacific J. Math.,179, 371-396, 1997.
\bibitem{y} A. Yagoub, M. Zarrabi, Semigroups of truncated Toeplitz operators, Oper. Matrices, Vol.12 No.3, 603-618, 2018.
\end{thebibliography}
\end{document}